\newtheorem{theorem}{Theorem}[section]
\newtheorem{lemma}[theorem]{Lemma}
\title{{\Large \bf  Maxima of the signless Laplacian spectral radius for planar graphs\thanks{Supported by NSFC
(Nos. 11271315, 11171290, 11201417) and Jiangsu Qing Lan Project (2014A).}}}
\author{Guanglong Yu$^a$\thanks{E-mail addresses:
yglong01@163.com.}
~ Jianyong Wang$^{b}$  ~ Shu-Guang Guo$^{a}$ ~
\\ ~ \\
{\footnotesize $^a$Department of Mathematics, Yancheng Teachers
University,}\\ {\footnotesize  Yancheng, 224002, Jiangsu, P.R. China}\\
{\footnotesize $^b$Department of Mathematics, Linyi University,}\\ {\footnotesize  Linyi, 276005, Shandong, P.R. China}}
\date{}
\begin{document}
\maketitle

\begin{abstract}
The signless Laplacian spectral radius of a graph is the largest eigenvalue of its signless Laplacian. In this paper, we prove that the graph $K_{2}\nabla P_{n-2}$ has the maximal signless Laplacian spectral radius among all planar graphs of order $n\geq 456$.

\bigskip
\noindent {\bf AMS Classification:} 05C50

\noindent {\bf Keywords:} Signless Laplacian; Spectral radius; Planar graph
\end{abstract}
\baselineskip 18.6pt

\section{Introduction}

\ \ \ \ The (adjacency) spectral radius of a graph is used in many fields, including chemistry, physics and computer science \cite{CDHS}, \cite{DSKS}, \cite{WCWF}. It arises a broad research now. Spectral radius of planar graphs
is of great interest in harmonic analysis, and bounds on it can be used in the
design and analysis of certain Monte Carlo algorithms \cite{MWW}. Spectral radius of planar graphs have applications not only in the theory of
algorithms but also in theoretical physics. Boots and Royle investigated the spectral radius of planar graphs
motivated by an application in geography networks \cite{BR}. This makes the research about the spectral radius of planar graphs interesting and dynamic, and many interesting results have emerged (see \cite{MWW}, \cite{CAV}, \cite{YH1, YH2}, \cite{SRJW}, for example). Very recently the signless Laplacian has attracted the
attention of researchers. Some results on the signless Laplacian
spectrum have been reported since 2005 and a new
spectral theory called the $Q$-theory is being
developed by many researchers \cite{D.P.S}-\cite{DCSKS}, \cite{FYIS}. A nature question is that how about the application of the $Q$-theory. By computer investigations of graphs \cite{D.H}, E. van Dam and W. Haemers found
that the signless Laplacian spectrum performs better than the
adjacency spectrum or Laplacian spectrum in distinguishing non-isomorphic graphs. In  computer science,  signless Laplacian spectral radius can also be used to study the properties of a network  \cite{DSKS}, \cite{ZJ1, ZJ2}. Schwenk and Wilson initiated the study of the eigenvalues of planar graphs \cite{SRJW}. In \cite{CAV}, D. Cao and A. Vince conjectured that $K_{2}\nabla P_{n-2}$ has the maximum
spectral radius among all planar graphs of order $n$, where
$\nabla$ denotes the $join$ of two graphs obtained from the union of these two graphs by
joining each vertex of the first graph to each vertex of the second
graph. The conjuncture is still open now. With the development of the $Q$-theory, a natural question is that what about the maximum signless Laplacian spectral radius of planar graphs. By some comparisons in \cite{FYIS}, it seems plausible that $K_{2}\nabla P_{n-2}$ also has the
maximal signless Laplacian spectral radius among planar graphs. In this paper, we confirm that among planar graphs with order $n\geq 456$, $K_{2}\nabla P_{n-2}$ has the
maximal signless Laplacian spectral radius.

The layout of this paper is as follows. Section 2
gives some notations and some working lemmas. In section 3,  our results are presented.

\section{Preliminary}

\ \ \ \ \ All graphs considered in this paper are undirected and
simple, i.e. no loops or multiple edges are allowed. Denote by $G=G[V(G)$, $E(G)]$ a graph with vertex set
$V(G)$ and edge set $E(G)$. $|V(G)|= n$ is the order. $m(G)=|E(G)|$ is edge number. Recall that given a graph $G$, $Q(G)= D(G) + A(G)$ is called the $signless$ $Laplacian$ $matrix$ of $G$, where $D(G)= \mathrm{diag}(d_{1}, d_{2},
\ldots, d_{n})$ with $d_{i}= d_{G}(v_{i})$ being the degree of
vertex $v_{i}$ $(1\leq i\leq n)$, and $A(G)$ is the adjacency matrix of $G$. The signless Laplacian spectral radius of $G$ is the largest eigenvalue $q(G)$ of its signless Laplacian $Q(G)$. From spectral graph theory, we know that if graph $G$ is connected, then there is a positive real eigenvector corresponding to $q(G)$, and the unit positive eigenvector corresponding to $q(G)$ is always called $Perron$ $eigenvector$. For a graph $G$ of order $n$, if $X=(x_{1}, x_{2}, \ldots, x_{n})^{T} \in R^{n}$ is a positive eigenvector corresponding to $q(G)$ satisfying that $\displaystyle \sum^{n}_{i=1}x_{i}=1$, then $X$ is called a $standard$ $eigenvector$ corresponding to $q(G)$.

Denote by $K_{n}$, $C_{n}$, $P_{n}$ a $complete$ $graph$, a $cycle$ and a $path$ of order $n$ respectively.
For a graph $G$, we let $V(G)$, $E(G)$ denote the vertex set and the edge set respectively. If there is no ambiguity, we use $d(v)$ instead of $d_{G}(v)$. We use $\delta$ or $\delta(G)$ to denote the minimum vertex degree of a graph, use $\Delta$ or $\Delta(G)$ to denote the largest vertex degree of a graph, and use $\Delta^{'}$ or $\Delta^{'}(G)$ to denote the second largest vertex degree in a graph. In a graph, the notation $v_{i}\sim v_{j}$ denotes that vertex $v_{i}$ is adjacent to
$v_{j}$. Denote by $K_{s,t}$ a complete bipartite graph with one part of size $s$ and another part of size $t$. In a graph $G$ of order $n\geq 4$, for a vertex $u\in V(G)$, let $N_{G}(u)$ denote the neighbor set of $u$, and let $N_{G}[u]=\{u\}\cup N_{G}(u)$. $G(u)=G[N_{G}[u]]$, $G^{\circ}(u)=G[N_{G}(u)]$ denote the subgraphs induced by $N_{G}[u]$, $N_{G}(u)$ respectively.

The reader is referred to \cite{BM, FH} for the
facts about planar and outer-planar graphs. A graph which can be drawn in the
plane in such a way that edges meet only at points corresponding to their common
ends is called a planar graph, and such a drawing is called a planar embedding
of the graph. A simple planar graph is (edge) $maximal$ if no edge can be added to the graph without violating planarity. In the planar embedding of a maximal planar graph $G$ of order $n\geq 3$, each face is triangle. For a planar graph $G$, we have $m(G)\leq 3n-6$ with equality if and only if it is maximal. In a maximal planar graph $G$ of order $n\geq 4$, $\delta(G)\geq 3$. A graph $G$ is $outer$-$planar$ if it has a planar embedding, called $standard$ $embedding$, in which all vertices lie on the boundary of its outer face. A simple outer-planar graph is (edge) $maximal$ if no edge can be added to the graph without violating outer-planarity. In the standard embedding of a maximal outer-planar graph $G$ of order $n\geq 3$, the boundary of the outer face is a Hamiltonian
cycle (a cycle contains all vertices) of $G$, and each of the other faces is triangle. For an outer-planar graph $G$, we have $m(G)\leq 2n-3$ with equality if and only if it is maximal. In a maximal planar graph $G$ of order $n\geq 4$, for a vertex $u\in V(G)$, $G^{\circ}(u)$ is an outer-planar graph, and $G(u)= u\nabla G^{\circ}(u)$. From a nonmaximal planar graph $G$, by adding edges to $G$, a maximal planar graph $G^{'}$ can be obtained. From spectral graph theory, for a graph $G$, it is known that $q(G+e)> q(G)$ if $e\notin E(G)$. Consequently, when we consider the maxima of the signless Laplacian spectral radius among planar graphs, it suffices to consider the maximal planar graphs directly.

Next we introduce some working lemmas.

\begin{lemma}{\bf \cite{SHH}}\label{le2,1} 
Let $u$ be a vertex of a maximal outer-planar graph on $n\geq 2$ vertices.
Then $\displaystyle\sum_{v\sim u}d(v)\leq n+3d(u)-4$.
\end{lemma}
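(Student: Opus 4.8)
The plan is to prove the bound by a double–counting argument whose only nontrivial input is that an outer‑planar graph contains no $K_{2,3}$ as a subgraph.

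\textbf{Setup and structure of $N(u)$.} We may assume $n\ge 3$; for $n=2$ the graph is $K_2$ and both sides equal $1$. Write $d=d(u)$, $S=N_G(u)$, $T=V(G)\setminus N_G[u]$, so $|S|=d$ and $|T|=n-1-d$, and let $f$ be the number of edges of $G$ with one end in $S$ and the other in $T$. Fix a standard embedding of $G$: the outer boundary is a Hamiltonian cycle and every inner face is a triangle. Since the faces at $u$ are triangles, the neighbours of $u$ can be listed as $w_0,w_1,\dots,w_{d-1}$ in rotational order so that $w_iw_{i+1}\in E(G)$ for $0\le i\le d-2$ and each $uw_iw_{i+1}$ is a face; since no chord may cross the edges $uw_j$, there are no further adjacencies among the $w_i$, so $G^{\circ}(u)$ is exactly the path $w_0w_1\cdots w_{d-1}$, and $w_0,w_{d-1}$ are the neighbours of $u$ on the Hamiltonian cycle.

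\textbf{The counting identity.} I would compute $\sum_{v\sim u}d(v)=\sum_{z\in V(G)}|N_G(z)\cap S|$ by splitting the outer sum according to whether $z=u$, $z\in S$, or $z\in T$. The term $z=u$ contributes $|S|=d$. For $z=w_i\in S$, $|N_G(w_i)\cap S|$ is the degree of $w_i$ in the path $w_0\cdots w_{d-1}$, so these terms sum to $2(d-1)$. The terms over $z\in T$ sum to $f$ by definition. Hence
\[
\sum_{v\sim u}d(v)=3d-2+f ,
\]
and it remains to prove $f\le n-2$.

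\textbf{Bounding $f$.} Reading off the embedding, the $d$ edges $uw_0,\dots,uw_{d-1}$ cut the disk into the $d-1$ triangles $uw_iw_{i+1}$ (and one sector carrying the outer face); beyond the edge $w_iw_{i+1}$ lies an ``outer cell'' $B_i$. Every vertex of $T$ lies in a unique $B_i$, and because the edges $uw_i$, $uw_{i+1}$, $w_iw_{i+1}$ separate $B_i$ from the rest of $G$, a vertex $z\in B_i$ has $N_G(z)\subseteq B_i\cup\{w_i,w_{i+1}\}$; in particular $|N_G(z)\cap S|\le 2$ for all $z\in T$. Moreover, at most one vertex of $B_i$ can be adjacent to both $w_i$ and $w_{i+1}$: two such vertices, together with $u$, would be three common neighbours of $w_i$ and $w_{i+1}$, giving a $K_{2,3}$ subgraph, which is impossible in an outer-planar graph. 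Letting $a$ (resp. $b$) be the number of vertices of $T$ with exactly one (resp. exactly two) neighbours in $S$, we get $f=a+2b$, while $a+b\le|T|=n-1-d$ and $b\le d-1$ (at most one per cell, $d-1$ cells). Therefore $f=(a+b)+b\le(n-1-d)+(d-1)=n-2$, and combining with the identity yields $\sum_{v\sim u}d(v)\le 3d-2+(n-2)=n+3d-4$.

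\textbf{Main obstacle.} The delicate point is the estimate $f\le n-2$ when $d(u)$ is small: bounding $f$ by the edge count $2(n-1)-3$ of the outer-planar graph $G-u$, or bounding each $|N_G(z)\cap S|$ by $2$, only proves the inequality when $d(u)\gtrsim n/2$. What closes the gap is the \emph{global} consequence of $K_{2,3}$-freeness — that across all $d-1$ cells there are at most $d-1$ vertices of $T$ with two neighbours in $S$ — and setting up the cell structure so that this can be applied cleanly is the crux of the argument.
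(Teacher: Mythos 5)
Your argument is correct, but note that there is nothing in the paper to compare it with: Lemma \ref{le2,1} is only quoted from the reference [SHH] (Shu and Hong), and no proof is reproduced here, so your write-up is a self-contained substitute rather than a variant of the paper's proof. Checking the details: in a standard embedding of a maximal outer-planar graph the neighbours of $u$ do form a path $w_0w_1\cdots w_{d-1}$ in $G^{\circ}(u)$ (consecutive neighbours are forced to be adjacent by the triangular inner faces at $u$, and any other chord $w_iw_j$ would cross some $uw_k$), so your identity $\sum_{v\sim u}d(v)=d+2(d-1)+f=3d-2+f$ with $f=e(S,T)$ is right; each vertex of $T$ lies in a unique cell $B_i$ and has at most the two vertices $w_i,w_{i+1}$ as neighbours in $S$, and the $K_{2,3}$-freeness of outer-planar graphs (or, just as directly, the observation that two common neighbours of $w_i,w_{i+1}$ inside $B_i$ would force crossing chords) caps the number of two-neighbour vertices at one per cell, giving $f\le (n-1-d)+(d-1)=n-2$ and hence the stated bound $n+3d(u)-4$, with the trivial case $n=2$ handled separately. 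The only cosmetic caveat is that for $d(u)=2$ with $w_0w_1$ a Hamiltonian-cycle edge the cell is empty, which your count handles automatically; otherwise the case analysis is complete, and your closing remark correctly identifies why the naive bounds ($|N_G(z)\cap S|\le 2$ alone, or $m\le 2n-3$ applied to $G-u$) are insufficient for small $d(u)$.
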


\begin{lemma}{\bf \cite{R.M}}\label{le2,3} 
Let $G$ be a graph. Then $\displaystyle q(G)\leq \max_{u\in V(G)}\{d_{G}(u)+\frac{1}{d_{G}(u)}\sum_{v\sim u}d_{G}(v)\}$.
\end{lemma}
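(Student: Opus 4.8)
\medskip
\noindent{\bf Proof plan.~}
Write $Q=Q(G)=D(G)+A(G)$. Since $x^{T}Qx=\sum_{uv\in E(G)}(x_{u}+x_{v})^{2}\ge 0$ for every vector $x$, the matrix $Q$ is symmetric and positive semidefinite, so its largest eigenvalue $q(G)$ equals its spectral radius $\rho(Q)$; moreover all entries of $Q$ are nonnegative. Deleting isolated vertices changes neither $q(G)$ nor the right-hand side of the claimed inequality, so I would begin by assuming $d_{G}(u)\ge 1$ for all $u$.

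The plan is to invoke the elementary one-sided Collatz--Wielandt bound: if $M$ is a nonnegative square matrix and $y$ a positive vector, then $\rho(M)\le\max_{i}(My)_{i}/y_{i}$. Indeed, setting $c=\max_{i}(My)_{i}/y_{i}$ we get $My\le cy$ entrywise, hence $M^{k}y\le c^{k}y$ for all $k\ge 1$; since $y>0$ this forces every entry of $M^{k}$ to be $O(c^{k})$, so $\rho(M)\le c$. I would apply this with $M=Q$ and with $y$ the degree vector $y=(d_{G}(v_{1}),\dots,d_{G}(v_{n}))^{T}$. For each vertex $u$, $(Qy)_{u}=(D(G)y)_{u}+(A(G)y)_{u}=d_{G}(u)^{2}+\sum_{v\sim u}d_{G}(v)$, so that $(Qy)_{u}/y_{u}=d_{G}(u)+\frac{1}{d_{G}(u)}\sum_{v\sim u}d_{G}(v)$. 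Taking the maximum over $u$ yields $q(G)=\rho(Q)\le\max_{u\in V(G)}\{d_{G}(u)+\frac{1}{d_{G}(u)}\sum_{v\sim u}d_{G}(v)\}$, which is the assertion.

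An equivalent route, essentially the equality case of the bound above when $G$ is connected, is to work with the positive Perron eigenvector $X$ of $Q$: choose $u$ maximizing $X_{u}/d_{G}(u)$, so that $X_{v}\le d_{G}(v)X_{u}/d_{G}(u)$ for every $v$; then $q(G)X_{u}=d_{G}(u)X_{u}+\sum_{v\sim u}X_{v}\le X_{u}\left(d_{G}(u)+\frac{1}{d_{G}(u)}\sum_{v\sim u}d_{G}(v)\right)$, and dividing by $X_{u}>0$ gives the bound; the disconnected case follows by restricting to a component on which $\rho(Q)$ is attained. I do not expect a genuine obstacle in this argument; the only slightly delicate point is bookkeeping around isolated vertices, where $1/d_{G}(u)$ is undefined, and that is exactly what the reduction at the start takes care of.
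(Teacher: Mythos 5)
Your argument is correct: the one-sided Collatz--Wielandt bound applied to the degree vector (equivalently, bounding $\rho(Q)$ by the maximum row sum of the similar matrix $D^{-1}QD$) gives exactly the stated inequality, and your preliminary removal of isolated vertices correctly disposes of the only degenerate case. The paper offers no proof of this lemma --- it is quoted from Merris's note \cite{R.M} --- and your route is essentially the standard proof of that cited bound (the same eigenvector-free principle the paper itself invokes later as Lemma \ref{le3,3}), so there is nothing to add.
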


\begin{lemma}{\bf \cite{DSK}}\label{le2,4} 
Let $G$ be a connected graph containing at least one edge. Then $\displaystyle q(G)\geq \Delta+1$ with equality if and only if $G\cong K_{1, n-1}$.
\end{lemma}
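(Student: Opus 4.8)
The plan is to bound $q(G)$ from below by the signless Laplacian spectral radius of a suitably chosen star subgraph, and then to extract the equality case from the positivity of the Perron eigenvector. First I would fix a vertex $v$ with $d_G(v)=\Delta$ and let $H$ be the spanning subgraph of $G$ whose edge set consists precisely of the $\Delta$ edges incident with $v$; thus $H$ is a copy of $K_{1,\Delta}$ together with $n-1-\Delta$ isolated vertices. Since the signless Laplacian of any graph $F$ is positive semidefinite — indeed $Q(F)=\sum_{uv\in E(F)}(e_u+e_v)(e_u+e_v)^{T}$ — writing $G'$ for the spanning subgraph with edge set $E(G)\setminus E(H)$ gives $x^{T}Q(G)x=x^{T}Q(H)x+x^{T}Q(G')x\geq x^{T}Q(H)x$ for every vector $x$, and hence $q(G)\geq q(H)$ by the Rayleigh quotient characterization of the largest eigenvalue.

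The next step is to evaluate $q(H)$. The isolated vertices contribute only the eigenvalue $0$, so $q(H)=q(K_{1,\Delta})$, and a one-line computation handles the star: looking for an eigenvector of $Q(K_{1,\Delta})$ equal to $a$ at the center and $b$ at each leaf, the eigenvalue equations read $\Delta a+\Delta b=\lambda a$ at the center and $a+b=\lambda b$ at a leaf, and $\lambda=\Delta+1$ yields the consistent positive solution $a=\Delta b$. Since this eigenvector is positive it realizes the spectral radius, so $q(K_{1,\Delta})=\Delta+1$, giving $q(G)\geq q(H)=\Delta+1$. (One could instead note that $K_{1,\Delta}$ is bipartite, so its signless Laplacian spectrum equals its Laplacian spectrum $\{\Delta+1,\,1^{(\Delta-1)},\,0\}$, or invoke the fact quoted in the excerpt that adding edges strictly increases $q$.)

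For the equality statement, suppose $q(G)=\Delta+1$. Because $G$ is connected, Perron--Frobenius furnishes a positive eigenvector $X$ with $Q(G)X=q(G)X$, so $q(G)\|X\|^{2}=X^{T}Q(G)X=X^{T}Q(H)X+X^{T}Q(G')X$, while also $X^{T}Q(H)X\leq q(H)\|X\|^{2}=q(G)\|X\|^{2}$; comparing the two lines forces $X^{T}Q(G')X=0$. But $X^{T}Q(G')X=\sum_{uv\in E(G)\setminus E(H)}(x_u+x_v)^{2}$, and every term is strictly positive since $X>0$, so $E(G)\setminus E(H)=\varnothing$, i.e.\ $G=H$; as $G$ is connected it has no isolated vertices, whence $G=K_{1,\Delta}=K_{1,n-1}$. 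Conversely, if $G\cong K_{1,n-1}$ then $\Delta=n-1$ and the computation above gives $q(G)=n=\Delta+1$. The argument is elementary throughout; the only point needing a little care is the strictness in the equality analysis, which is precisely where connectedness (hence positivity of the Perron eigenvector) is used.
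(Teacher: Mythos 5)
Your proof is correct. Note, however, that the paper does not prove this statement at all: Lemma~\ref{le2,4} is quoted verbatim from Cvetkovi\'{c}--Simi\'{c} \cite{DSK}, so there is no in-paper argument to compare against. Your route --- writing $Q(F)=\sum_{uv\in E(F)}(e_u+e_v)(e_u+e_v)^{T}$, peeling off the star at a maximum-degree vertex to get $q(G)\geq q(K_{1,\Delta})=\Delta+1$, and then using the positivity of the Perron eigenvector of the connected graph $G$ to force $X^{T}Q(G')X=\sum_{uv\in E(G)\setminus E(H)}(x_u+x_v)^{2}=0$ in the equality case --- is a clean and complete self-contained proof, and it correctly handles the one delicate point (edge-deletion monotonicity of $q$ without worrying about disconnecting the graph) via positive semidefiniteness rather than via Perron--Frobenius comparison of subgraphs.
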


\section{Main results}

\begin{lemma}\label{le3,1} 
Let $G$ be a maximal planar graph with order $n\geq 6$ and largest degree $\Delta(G)$. Then
$\displaystyle q(G)\leq \max_{u\in V(G)}\{d_{G}(u)+2+\frac{3n-9}{d_{G}(u)}\}$.
Moreover,\\
$\mathrm{(i)}$ if $\Delta(G)=n-1$, then $q(G)\leq n+4-\frac{6}{n-1}$;\\
$\mathrm{(ii)}$ if $\Delta(G)=n-2$, then $q(G)\leq n+3-\frac{3}{n-2}$;\\
$\mathrm{(iii)}$ if $\Delta(G)\leq n-3$, then $q(G)\leq n+2$.

\end{lemma}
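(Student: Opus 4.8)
The plan is to apply Lemma \ref{le2,3} directly, which bounds $q(G)$ by $\max_{u}\{d_G(u) + \frac{1}{d_G(u)}\sum_{v\sim u}d_G(v)\}$, and then control the quantity $\sum_{v\sim u}d_G(v)$ for each vertex $u$ in a maximal planar graph. The key structural fact is that in a maximal planar graph $G$ of order $n\geq 4$, the subgraph $G^{\circ}(u)$ induced on the neighborhood $N_G(u)$ is outer-planar, and in fact (since each face of $G$ is a triangle) it is a maximal outer-planar graph on $d_G(u)$ vertices whose outer Hamiltonian cycle is exactly the link of $u$. I would then invoke Lemma \ref{le2,1}: for this outer-planar graph $H = G^{\circ}(u)$ on $d_G(u)$ vertices, $\sum_{v\sim_H w} d_H(v) \leq d_G(u) + 3d_H(w) - 4$ for any $w$, but more usefully I want the total sum $\sum_{v\in V(H)} d_H(v) = 2m(H) \leq 2(2d_G(u)-3) = 4d_G(u) - 6$. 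Since for each neighbor $v$ of $u$ we have $d_G(v) = d_H(v) + 1 + (\text{number of neighbors of } v \text{ outside } N_G[u])$, summing is not immediately clean, so instead I would count via edges: $\sum_{v\sim u} d_G(v)$ counts, with multiplicity, edges incident to $N_G(u)$; the edges inside $N_G[u]$ (i.e., in $G(u) = u\nabla H$) are $d_G(u)$ (spokes to $u$) plus $m(H) \leq 2d_G(u)-3$, and each contributes at most $2$ to the sum over $v \sim u$, while each of the remaining $m(G) - m(G(u)) \leq (3n-6) - (3d_G(u) - 3) = 3n - 3 - 3d_G(u)$ edges of $G$ contributes at most $1$. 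This gives $\sum_{v\sim u} d_G(v) \leq 2(d_G(u) + 2d_G(u) - 3) + (3n - 3 - 3d_G(u))\cdot 2$ — I must be careful here, because an edge from a neighbor of $u$ to a non-neighbor gets counted once, whereas I might be double counting; the right bookkeeping is $\sum_{v\sim u}d_G(v) = 2\,m(G(u)) + e(N_G(u), V(G)\setminus N_G[u])$ where the last term is at most $2(m(G) - m(G(u)))$ minus the edges among non-neighbors, hence $\sum_{v\sim u} d_G(v) \le 2m(G(u)) + 2(m(G)-m(G(u))-m(G[V\setminus N_G[u]])) \le 2 m(G) - 2 m(G(u)) + \text{(correction)}$. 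After cleaning this up I expect to land on $\sum_{v\sim u} d_G(v) \le 2 d_G(u) + 2(3n-9)$, which plugged into Lemma \ref{le2,3} yields $q(G) \le d_G(u) + 2 + \frac{3n-9}{d_G(u)}$ — wait, that needs $\frac{1}{d_G(u)}\sum_{v\sim u}d_G(v) \le 2 + \frac{2(3n-9)}{d_G(u)}$, so I actually need the sharper bound $\sum_{v\sim u}d_G(v) \le 2d_G(u) + (3n-9)$; I would obtain this by a more careful face-counting argument: the link of $u$ together with $u$ forms a wheel-like region, and edges leaving $N_G(u)$ can be bounded by counting faces in $G - u$, using that $G - u$ is planar with at most $3(n-1) - 6$ edges and the outer cycle of length $d_G(u)$ bounding the hole where $u$ sat.

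Once the general bound $q(G) \le \max_u\{d_G(u) + 2 + \frac{3n-9}{d_G(u)}\}$ is established, parts (i), (ii), (iii) follow by substituting the relevant value of $\Delta(G)$ and using monotonicity. The function $f(d) = d + 2 + \frac{3n-9}{d}$ is convex in $d > 0$ with minimum near $d = \sqrt{3n-9}$, so on an interval $[3, \Delta]$ (recall $\delta(G)\geq 3$ for maximal planar graphs of order $\geq 4$) its maximum is attained at one of the endpoints. For $n$ large, $\sqrt{3n-9} < n - 3$, so I need to check both $d = 3$ and $d = \Delta$. At $d = 3$: $f(3) = 5 + \frac{3n-9}{3} = 5 + n - 3 = n + 2$; this is exactly the bound in (iii), and it dominates $f(d)$ for all $d \le n-3$ precisely when $f(n-3) \le n+2$, i.e. $(n-3) + 2 + \frac{3n-9}{n-3} = n - 1 + 3 = n+2$, so equality holds and (iii) is clean. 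For (i), $\Delta = n-1$ gives $f(n-1) = (n-1) + 2 + \frac{3n-9}{n-1} = n + 1 + \frac{3n-9}{n-1} = n + 1 + 3 - \frac{6}{n-1} = n + 4 - \frac{6}{n-1}$, and one checks $f(n-1) \ge f(3) = n+2$ for $n \ge 6$, so this is the max — giving (i). For (ii), $\Delta = n - 2$ gives $f(n-2) = (n-2) + 2 + \frac{3n-9}{n-2} = n + \frac{3n-9}{n-2} = n + 3 - \frac{3}{n-2}$, again exceeding $f(3)$, giving (ii).

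The main obstacle will be the precise edge-counting that yields $\sum_{v\sim u} d_G(v) \le 2d_G(u) + (3n - 9)$ rather than a weaker constant. The naive bound via $m(G) \le 3n-6$ double-counts edges with both endpoints in $N_G(u)$ and is not tight enough; the fix is to use the maximal-outer-planar structure of $G^{\circ}(u)$ together with Lemma \ref{le2,1} applied within that link, combined with the observation that edges from $N_G(u)$ to the exterior, when added to $G^{\circ}(u)$'s edges and the spokes to $u$, still form a planar graph on $n$ vertices. Concretely, I would write $\sum_{v\sim u} d_G(v) = 2m(G^{\circ}(u)) + d_G(u) + e$ where $e = e(N_G(u), V(G)\setminus N_G[u])$ counts edges leaving the closed neighborhood (the $d_G(u)$ term being the spokes to $u$ counted once each from the $v$ side, and $m(G^{\circ}(u))$ counted twice), then bound $m(G^{\circ}(u)) \le 2d_G(u) - 3$ and $e \le 3n - 6 - m(G(u)) - m(G[V\setminus N_G[u]]) \le 3n - 6 - (3d_G(u) - 3) = 3n - 3 - 3d_G(u)$ (using $m(G(u)) = m(u\nabla G^{\circ}(u)) = d_G(u) + m(G^{\circ}(u))$ and that $G(u)$ is a maximal planar subgraph on $d_G(u)+1$ vertices, hence $m(G(u)) \ge 3(d_G(u)+1) - 6 = 3d_G(u) - 3$, with the reverse inequality $\le$ also holding so it equals $3 d_G(u) - 3$). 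Adding: $\sum_{v\sim u} d_G(v) \le 2(2d_G(u)-3) + d_G(u) + (3n - 3 - 3d_G(u)) = 2d_G(u) - 6 + 3n - 3 = 3n - 9 + 2d_G(u)$, exactly as needed. Dividing by $d_G(u)$ and adding $d_G(u)$ gives the claimed bound via Lemma \ref{le2,3}; the subsequent three cases are the routine endpoint analysis of the convex function $f$ described above.
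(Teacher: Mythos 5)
Your overall strategy is the same as the paper's: apply Merris's bound (Lemma \ref{le2,3}), bound $\sum_{v\sim u}d_{G}(v)$ using $m(G)=3n-6$ together with the outer-planarity of the link $G^{\circ}(u)$, and finish with the endpoint analysis of the convex function $f(x)=x+2+\frac{3n-9}{x}$ on $[3,\Delta]$; that last part of your argument is correct and identical to the paper's. However, your final edge count rests on a false structural claim: neither is $G^{\circ}(u)$ necessarily maximal outer-planar, nor is $G(u)=u\nabla G^{\circ}(u)$ a maximal planar graph. In the octahedron ($n=6$, all degrees $4$, which satisfies the hypothesis $n\geq 6$), $G^{\circ}(u)$ is a chordless $4$-cycle with $4<2\cdot 4-3$ edges and $G(u)$ is the wheel on $5$ vertices with $8<3\cdot 5-6$ edges, so your asserted lower bound $m(G(u))\geq 3d_{G}(u)-3$ fails; consequently the intermediate inequality $e\bigl(N_{G}(u),V\setminus N_{G}[u]\bigr)\leq 3n-3-3d_{G}(u)$ that you derive from it also fails there ($e=4$ while $3n-3-3d_{G}(u)=3$). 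The root of the problem is that you bound the two appearances of $m(G^{\circ}(u))$ in opposite directions: you use the upper bound $2d_{G}(u)-3$ for the term $2m(G^{\circ}(u))$, but to control $e$ you subtract $m(G(u))=d_{G}(u)+m(G^{\circ}(u))$ and therefore need a lower bound on $m(G^{\circ}(u))$, which induced links of a triangulation do not provide.

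The conclusion is nevertheless salvageable, and the repair is exactly the paper's accounting: keep $m(G^{\circ}(u))$ symbolic until the end. With $m_{\circ}=m(G^{\circ}(u))$ and $m_{1}=m(G[V\setminus N_{G}[u]])$, your own decomposition gives
$$\sum_{v\sim u}d_{G}(v)=2m_{\circ}+d_{G}(u)+e=2m_{\circ}+d_{G}(u)+\bigl(3n-6-d_{G}(u)-m_{\circ}-m_{1}\bigr)\leq 3n-6+m_{\circ}\leq 3n-9+2d_{G}(u),$$
because the net coefficient of $m_{\circ}$ is $+1$, and only at the last step does one invoke $m_{\circ}\leq 2d_{G}(u)-3$, which holds for every outer-planar graph (maximality of the link is not needed and is not true in general). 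With this correction the rest of your proposal, including parts (i)--(iii), goes through and coincides with the paper's proof.
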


\begin{proof}
For any vertex $u\in V(G)$, let $N_{G}(u)=\{v_{1}, v_{2}, \ldots, v_{t}\}$, $V_{1}=V(G)\backslash N_{G}[u]$. For $1\leq i\leq t$, let $\alpha_{i}=d_{G^{\circ}(u)}(v_{i})$. Note that $\displaystyle m(G^{\circ}(u))=|E(G^{\circ}(u))|=\frac{1}{2}\sum^{t}_{i=1}\alpha_{i}$. Between $N_{G}(u)$ and $V_{1}$, there are $\displaystyle 3n-6-\frac{1}{2}\sum^{t}_{i=1}\alpha_{i}-d_{G}(u)$ edges. Consequently,
$$\displaystyle\sum_{v\sim u}d_{G}(v)= d_{G}(u)+[3n-6-\frac{1}{2}\sum^{t}_{i=1}\alpha_{i}-d_{G}(u)]+\sum^{t}_{i=1}\alpha_{i}=
3n-6+\frac{1}{2}\sum^{t}_{i=1}\alpha_{i}.$$ From the narration in Section 2, we know that $G^{\circ}(u)$ is an outer-planar graph. Then $m(G^{\circ}(u))\leq 2d_{G}(u)-3$. As a result,
$\displaystyle\sum_{v\sim u}d_{G}(v)\leq 3n-9+2d_{G}(u)$,
and
$$d_{G}(u)+\frac{1}{d_{G}(u)}\sum_{v\sim u}d_{G}(v)\leq d_{G}(u)+2+\frac{3n-9}{d_{G}(u)}$$
By Lemma \ref{le2,3}, $\displaystyle q(G)\leq \max_{u\in V(G)}\{d_{G}(u)+2+\frac{3n-9}{d_{G}(u)}\}$.

Let $f(x)=x+2+\frac{3n-9}{x}$. It can be checked that $f(x)$ is convex.
Note that $3\leq d_{G}(u)\leq \Delta$. Then $$d_{G}(u)+\frac{1}{d_{G}(u)}\sum_{v\sim u}d_{G}(v)\leq\max\{5+\frac{3n-9}{3}, \Delta+2+\frac{3n-9}{\Delta}\}.$$ By Lemma \ref{le2,3}, then $\mathrm{(i)}$-$\mathrm{(iii)}$ follow. This completes the proof.
\ \ \ \ \ $\Box$
\end{proof}

Let $\mathcal {H}=K_{2}\nabla P_{n-2}$ for $n\geq 4$ (see Fig. 3.1) and let $\mathcal {H}=K_{n}$ for $n=1, 2, 3$.

\setlength{\unitlength}{0.6pt}
\begin{center}
\begin{picture}(488,203)
\put(472,188){\circle*{4}}
\put(472,23){\circle*{4}}
\qbezier(472,188)(472,106)(472,23)
\put(24,102){\circle*{4}}
\put(135,102){\circle*{4}}
\qbezier(24,102)(79,102)(135,102)
\put(185,101){\circle*{4}}
\put(200,101){\circle*{4}}
\put(214,101){\circle*{4}}
\put(82,102){\circle*{4}}
\put(266,102){\circle*{4}}
\put(373,102){\circle*{4}}
\qbezier(266,102)(319,102)(373,102)
\put(316,102){\circle*{4}}
\qbezier(373,102)(422,63)(472,23)
\qbezier(373,102)(422,145)(472,188)
\qbezier(316,102)(394,63)(472,23)
\qbezier(316,102)(394,145)(472,188)
\qbezier(266,102)(369,63)(472,23)
\qbezier(266,102)(369,145)(472,188)
\qbezier(135,102)(303,63)(472,23)
\qbezier(135,102)(303,145)(472,188)
\qbezier(82,102)(277,63)(472,23)
\qbezier(82,102)(277,145)(472,188)
\qbezier(24,102)(248,63)(472,23)
\qbezier(24,102)(248,145)(472,188)
\qbezier(266,102)(249,102)(232,102)
\qbezier(135,102)(151,102)(168,102)
\put(473,190){$v_{1}$}
\put(475,19){$v_{2}$}
\put(380,98){$v_{3}$}
\put(302,108){$v_{4}$}
\put(255,108){$v_{5}$}
\put(3,104){$v_{n}$}
\put(202,-9){Fig 3.1. $\mathcal {H}$}
\end{picture}
\end{center}

\begin{lemma}\label{le3,2} 
Suppose that the order $n$ of $\mathcal {H}$ is at least $5$ (see Fig. 3.1). Then $q(\mathcal {H})> n+2$.
\end{lemma}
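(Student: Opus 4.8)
The plan is to bound $q(\mathcal{H})$ from below by a Rayleigh quotient of the signless Laplacian. For any graph $G$ and any nonzero real vector $x=(x_v)_{v\in V(G)}$ one has
$$q(G)\ \geq\ \frac{x^{T}Q(G)x}{x^{T}x}\ =\ \frac{\sum_{uv\in E(G)}(x_u+x_v)^2}{\sum_{v\in V(G)}x_v^2},$$
so it suffices to exhibit one test vector whose quotient exceeds $n+2$. Using the labelling of Fig.\ 3.1, $\mathcal{H}=K_2\nabla P_{n-2}$ has the two vertices $v_1,v_2$ of the $K_2$ each of degree $n-1$, and its edge set splits into three groups: the single edge $v_1v_2$; the $2(n-2)$ ``spoke'' edges joining $\{v_1,v_2\}$ to $\{v_3,\dots,v_n\}$; and the $n-3$ path edges $v_iv_{i+1}$ with $3\leq i\leq n-1$.

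Next I would take the test vector $x$ with $x_{v_1}=x_{v_2}=a$ for a parameter $a>0$ to be chosen, and $x_{v_i}=1$ for $3\leq i\leq n$. Summing the three edge groups gives numerator $4a^2+2(n-2)(a+1)^2+4(n-3)=2na^2+4(n-2)a+(6n-16)$, while the denominator is $2a^2+(n-2)$. Thus $q(\mathcal{H})>n+2$ follows once
$$2na^2+4(n-2)a+6n-16\ >\ (n+2)\bigl(2a^2+n-2\bigr),$$
which, after cancellation, is exactly the quadratic inequality $4a^2-4(n-2)a+(n^2-6n+12)<0$ in the variable $a$. The discriminant of the left-hand side equals $16(n-2)^2-16(n^2-6n+12)=32(n-4)$, which is positive precisely when $n\geq 5$; concretely, the choice $a=\tfrac{n-2}{2}$ makes the left-hand side equal to $-2(n-4)<0$. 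Plugging this value of $a$ back into the Rayleigh quotient then yields the strict inequality $q(\mathcal{H})>n+2$, and the hypothesis $n\geq 5$ is exactly what is needed.

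I expect no genuine obstacle here; the argument is a short, self-contained computation. The only point that takes a moment's thought is the choice of test vector: the all-ones vector only gives $4m(\mathcal{H})/n=12-24/n$, far below $n+2$, so one must put weight growing linearly in $n$ on the two dominating vertices $v_1,v_2$ to capture their degree contribution. The remaining care is purely bookkeeping — correctly counting the one $K_2$-edge, the $2(n-2)$ spokes, and the $n-3$ path edges — which is routine.
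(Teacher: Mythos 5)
Your argument is correct: the identity $x^{T}Q(\mathcal H)x=\sum_{uv\in E}(x_u+x_v)^2$ is right, the edge count ($1$ edge in the $K_2$, $2(n-2)$ spokes, $n-3$ path edges, totalling $3n-6$) is right, and the reduction to $4a^2-4(n-2)a+(n^2-6n+12)<0$ checks out; with $a=\tfrac{n-2}{2}$ the left side is $-2(n-4)<0$ exactly when $n\geq 5$, so the Rayleigh quotient strictly exceeds $n+2$ and hence $q(\mathcal H)>n+2$. This is, however, a genuinely different route from the paper's. The paper works with the standard (Perron) eigenvector of $\mathcal H$: using the symmetry $x_1=x_2$, $x_3=x_n$, it writes the eigenvalue equations at $v_1$, at $v_3$, and summed over $v_3,\dots,v_n$, solves for $x_1$ and $x_3$ in terms of $q$, proves $x_1>x_3$ from a further eigenvalue-equation comparison, and from that deduces the quadratic inequality $q^2-(n+6)q+4n+8>0$, whence $q>n+2$. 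Your variational argument is shorter and more self-contained (it needs only the Rayleigh-quotient characterization of the largest eigenvalue and one explicit test vector, with no discussion of eigenvector positivity or component comparisons), while the paper's eigenvector manipulation yields, as a by-product, structural information about the Perron vector and an exact polynomial relation that could in principle locate $q(\mathcal H)$ more precisely; for the purpose of this lemma the two deliver the same bound under the same hypothesis $n\geq 5$.
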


\begin{proof}
Let $X=(x_1, x_2, \ldots, x_{n})^T \in R^{n}$ be a standard eigenvector corresponding to $q(\mathcal {H})$, where $x_{i}$ corresponds to vertex $v_{i}$ and $\displaystyle \sum^{n}_{i=1}x_{i}=1$. By symmetry, $x_{1}=x_{2}$, $x_{3}=x_{n}$. By Lemma
\ref{le2,4}, $q(G)\geq n$.

Note that $$\displaystyle q(G)x_1=(n-1)x_1+x_2+\sum^{n}_{i=3}x_{i}=(n-2)x_1+x_1+x_2+\sum^{n}_{i=3}x_{i}=(n-2)x_1+1.$$ Then $(q(G)-n+2)x_1=1$, and $$x_1=\frac{1}{q(G)-n+2}. \hspace{3cm} (1)$$

Note that $$\displaystyle q(G)\sum^{n}_{i=3}x_{i}=3x_{3}+3x_{n}+4\sum^{n-1}_{i=4}x_{i}+x_{3}+x_{n}+2\sum^{n-1}_{i=4}x_{i}+(n-2)x_1+(n-2)x_2$$
$$\, =3x_{3}+3x_{n}+4\sum^{n-1}_{i=4}x_{i}+x_{3}+x_{n}+2\sum^{n-1}_{i=4}x_{i}+2(n-2)x_1$$
$$\displaystyle =6\sum^{n}_{i=3}x_{n}+2(n-2)x_1-2(x_{3}+x_{n}).\ \, \hspace{3.1cm}$$
Then $$\displaystyle (q(G)-6)\sum^{n}_{i=3}x_{i}=2(n-2)x_1-2(x_{3}+x_{n})=2(n-2)x_1-4x_{3},$$
 $$\displaystyle \sum^{n}_{i=3}x_{i}=\frac{2(n-2)x_1-4x_{3}}{q(G)-6},\hspace{2.8cm}$$
 and $$\displaystyle 1=\sum^{n}_{i=1}x_{i}=\frac{2(n-2)x_1-4x_{3}}{q(G)-6}+x_1+x_2$$$$=\frac{2(n-2)x_1-4x_{3}}{q(G)-6}+\frac{2}{q(G)-n+2}\ \, $$
$$\displaystyle =\frac{\frac{2(n-2)}{q(G)-n+2}-4x_{3}}{q(G)-6}+\frac{2}{q(G)-n+2}.\ \ \, \hspace{0.1cm}$$
Then $$\hspace{1.8cm} x_{3}=\frac{2(n-2)-(q(G)-n)(q(G)-6)}{4(q(G)-n+2)}. \hspace{2cm} (2)$$
Note that $q(G)x_3=3x_3+x_1+x_2+x_4$. Then $$\displaystyle q(G)x_1-q(G)x_3=(n-2)x_1-2x_3+\sum^{n}_{i=5}x_{i}$$
$$\hspace{4.3cm}\, \displaystyle =(n-4)x_1+2x_1-2x_3+\sum^{n}_{i=5}x_{i},$$
and then $$\displaystyle (q(G)-2)(x_1-x_3)=(n-4)x_1+\sum^{n}_{i=5}x_{i}.$$
This implies that $x_1>x_3$. Combining with (1) and (2), we get
$$\frac{2(n-2)-(q(G)-n)(q(G)-6)}{4(q(G)-n+2)}< \frac{1}{q(G)-n+2}. \hspace{1cm} (3)$$
Simplifying (3), we get $q^{2}(G)-(6+n)q(G)+4n+8>0$. Then
$$\displaystyle q(G)>\frac{6+n+\sqrt{(6+n)^{2}-16(n+2)}}{2}=n+2.$$
This
completes the proof. \ \ \ \ \ $\Box$
\end{proof}

By the narration in Section 2 and Lemmas \ref{le3,1}, \ref{le3,2}, to consider the maxima of the signless Laplacian spectral radius among planar graphs of order $n\geq 5$, it suffices to consider only the graphs with $\Delta=n-1$ and $\Delta=n-2$.

\begin{lemma}{\bf \cite{H.M}}\label{le3,3,0} 
Let $A$ be an irreducible nonnegative square real matrix of order $n$ and $X=(x_{1}$, $x_{2}$, $x_{3}$, $\ldots$, $x_{n})^{T}$ be a real vector. For $1\leq i\leq n$, let $A_{i}$ be the $ith$ row of $A$. If for any $1\leq i\leq n$, $A_{i}X\leq rx_{i}$, we say that $AX\leq rX$.
\end{lemma}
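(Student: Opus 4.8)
The plan here is short, because the statement is not a proposition to be verified but a \emph{definition}: it introduces a piece of shorthand, and consequently there is no theorem-content to establish and no obstacle to overcome — the ``proof'' is simply the act of fixing the convention. Concretely, I would recall that for the $i$th row $A_i=(a_{i1},\dots,a_{in})$ of $A$ and for $X=(x_1,\dots,x_n)^T$ the scalar $A_iX=\sum_{j=1}^n a_{ij}x_j$ is well defined, and then declare that the notation ``$AX\le rX$'' is to mean, \emph{by definition}, that $A_iX\le rx_i$ holds for every index $i\in\{1,\dots,n\}$ simultaneously. Nothing further needs to be said; there is no derivation, only the adoption of terminology.

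The only forward-looking point worth flagging is \emph{why} the hypotheses are attached to the definition. The assumptions that $A$ be a square, nonnegative, irreducible real matrix are precisely the setting of the Perron--Frobenius theorem, and the notation is being put in place so that, in the spectral estimates that follow, one may invoke the standard consequence: when $X$ is in addition entrywise positive, $AX\le rX$ forces a bound on the largest eigenvalue, namely $\rho(A)\le r$, with equality if and only if $AX=rX$. That inequality is the substantive statement this definition is preparing the ground for; the definition itself, as worded, is purely notational, which is exactly why the authors quote it from \cite{H.M} without argument. Hence the entire ``plan'' is: record the convention, note its intended use, and proceed to the applications.
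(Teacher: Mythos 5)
You are right that this ``lemma'' is purely a notational convention, and the paper itself offers no proof --- it simply cites \cite{H.M} and moves on, exactly as you describe. Your reading matches the paper's treatment, and your remark about the intended Perron--Frobenius-style use (realized in the next lemma, Lemma \ref{le3,3}) is the correct explanation for why the irreducibility and nonnegativity hypotheses are carried along.
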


\begin{lemma}{\bf \cite{GYSH}}\label{le3,3} 
Let $A$ be an irreducible nonnegative square real matrix of order $n$ and spectral radius $\rho$. If there exists a nonnegative real vector $y\neq 0$ and a real coefficient polynomial function $f$ such that $f(A)y\leq ry$ $(r\in {\bf R})$, then $f(\rho)\leq r$.
\end{lemma}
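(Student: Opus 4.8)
The plan is to reduce everything to the Perron--Frobenius theorem applied to the transpose. Since $A$ is irreducible and nonnegative, so is $A^{T}$, and it has the same spectral radius $\rho$. Perron--Frobenius for irreducible matrices then supplies a \emph{strictly positive} eigenvector $z$ of $A^{T}$ with $A^{T}z=\rho z$, i.e. a strictly positive left eigenvector of $A$ satisfying $z^{T}A=\rho z^{T}$. This $z$ is the object I will pair against the hypothesis vector $y$.

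The next step is to upgrade $z$ from a left eigenvector of $A$ to one of $f(A)$. Writing $f(x)=\sum_{k}c_{k}x^{k}$ and arguing inductively that $z^{T}A^{k}=\rho^{k}z^{T}$, one gets $z^{T}f(A)=\sum_{k}c_{k}z^{T}A^{k}=\bigl(\sum_{k}c_{k}\rho^{k}\bigr)z^{T}=f(\rho)z^{T}$.

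Now I combine this with the assumption. By the convention of Lemma \ref{le3,3,0}, the inequality $f(A)y\le ry$ means it holds row by row, i.e. componentwise. Multiplying on the left by the strictly positive vector $z^{T}$ preserves the inequality, so $z^{T}f(A)y\le r\,z^{T}y$. Using the previous paragraph, the left-hand side is $f(\rho)\,z^{T}y$, hence $f(\rho)\,z^{T}y\le r\,z^{T}y$. Finally $z^{T}y>0$ because $z$ is strictly positive while $y$ is nonnegative and nonzero; dividing through by $z^{T}y$ gives $f(\rho)\le r$.

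I do not expect a genuine obstacle: the argument is pure linear algebra once the right eigenvector is in hand. The only two points requiring attention are (a) invoking Perron--Frobenius for $A^{T}$, not $A$, so that the vector paired against $y$ is a \emph{left} eigenvector and the eigenvalue relation threads through the polynomial $f$; and (b) justifying $z^{T}y>0$, which is precisely where the strict positivity of the Perron eigenvector of an irreducible matrix and the hypothesis $y\neq 0$, $y\ge 0$ are essential.
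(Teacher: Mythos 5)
The paper itself offers no proof of this lemma: it is imported verbatim from \cite{GYSH}, so there is nothing internal to compare against. Your argument is correct and is the standard one: pairing the componentwise inequality $f(A)y\le ry$ with a strictly positive left Perron eigenvector $z$ of $A$ (equivalently, the Perron eigenvector of $A^{T}$, which exists and is positive by irreducibility) yields $f(\rho)\,z^{T}y\le r\,z^{T}y$, and since $z>0$, $y\ge 0$, $y\ne 0$ force $z^{T}y>0$, dividing gives $f(\rho)\le r$; both of the delicate points you flag, the use of a \emph{left} eigenvector and the positivity of $z^{T}y$, are handled correctly.
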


\begin{lemma}\label{le3,4} 
Let $G$ be a maximal planar graph with order $n \geq 115$ and $d_{G}(v_{1})=\Delta(G) = n -2$. In $G$, there are exactly $1\leq k\leq 12$ vertices $v_{2}$, $v_{3}$, $\ldots$, $v_{k+1}$ such that $\frac{n}{6}+1\leq d_{G}(v_{i})\leq n-61,$ and for $k+2\leq i\leq n$, $d_{G}(v_{i})<\frac{n}{6}+1$. Then $q(G)\leq n-2$.
\end{lemma}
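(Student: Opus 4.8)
The plan is to convert the (very rigid) degree data for $G$ into an upper estimate for $q(G)$ by means of the inequality machinery of Lemmas \ref{le3,3,0}--\ref{le3,3}; combined with Lemma \ref{le2,4}, which gives $q(G)\ge\Delta(G)+1=n-1$, this yields the bound on $q(G)$ stated in the lemma, which is what the later case analysis for maximal planar graphs with $\Delta=n-2$ will use.

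First I would fix the local picture at $v_1$. Since $d_G(v_1)=n-2$ there is a unique vertex $w$ with $w\notin N_G[v_1]$, and $G^{\circ}(v_1)=G-v_1-w$ is an outer-planar graph on $n-2$ vertices with $m(G^{\circ}(v_1))=2n-4-d_G(w)$; moreover, as computed in the proof of Lemma \ref{le3,1}, $\sum_{v\sim u}d_G(v)=3n-6+m(G^{\circ}(u))$ for every vertex $u$. Two consequences of outer-planarity are then exploited. Since $G^{\circ}(v_1)$ contains no $K_{2,3}$, any two of its vertices have at most two common neighbours inside it, hence at most four common neighbours in $G$ (the $\le 2$ inside $G^{\circ}(v_1)$, plus possibly $v_1$ and $w$); and Lemma \ref{le2,1}, applied in a maximal outer-planar closure of $G^{\circ}(v_1)$, bounds $\sum_{v\sim u}d(v)$ for each $u$ lying in $G^{\circ}(v_1)$. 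Feeding in the hypotheses $d_G(v_i)\le n-61$ for the $k$ medium vertices, $d_G(v_i)<\frac{n}{6}+1$ for the remaining ones, $k\le 12$, and $\delta(G)\ge 3$, one gets for every $u$ a bound on $\sum_{v\sim u}d_G(v)$ — and hence, via $\sum_j|N_G(u)\cap N_G(v_j)|=\sum_{z\sim u}d_G(z)$, on the relevant common-neighbour sums — well below the generic estimate $3n-9+2d_G(u)$ used in Lemma \ref{le3,1}; informally, $v_1$ and the $\le 12$ medium vertices already consume almost all of the outer-planar edge budget $2(n-2)-3$, which forces the small-degree vertices to be mutually sparse.

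Second, I would apply Lemma \ref{le3,3} with a suitable polynomial $f$ (a quadratic $f(x)=x^2-\alpha x$ with $\alpha$ of order $n$ is the natural candidate, chosen so that $f$ is increasing near $q(G)$ and that $f(q(G))\le r$ translates exactly into the claimed bound on $q(G)$) together with a nonnegative test vector $y$ that is largest on $v_1$, of intermediate size on the $\le 12$ medium vertices, and of order $1/n$ (or smaller) on the remaining vertices, verifying $f(Q(G))y\le ry$ row by row using $(Q^2)_{ij}=d_id_j\delta_{ij}+(d_i+d_j)A_{ij}+|N_G(v_i)\cap N_G(v_j)|$. The main obstacle is precisely this verification: the rows at $v_1$ and at the medium vertices are the delicate ones — there $|N_G(u)\cap N_G(v_1)|$ can be of order $n$ and must be absorbed by the negative diagonal and off-diagonal contributions of $f(Q)$ — and making all the row inequalities hold simultaneously is where the precise thresholds $\frac{n}{6}+1$, $n-61$, $k\le 12$ and $n\ge 115$ are consumed; balancing the three scales of $y$ against $\alpha$ and against these constants is the crux of the argument.
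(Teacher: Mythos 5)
There is a genuine gap: your argument is a plan, not a proof. You propose $f(x)=x^{2}-\alpha x$ and a three-scale test vector, and then state that verifying $f(Q(G))y\le ry$ row by row is ``the crux'' --- but you never choose $\alpha$, $r$ or the actual entries of $y$, and you do not check a single row. It is precisely at the rows of $v_{1}$ and of the $\le 12$ medium vertices that the order-$n$ quantities $d_{i}d_{j}$ and $|N_G(v_i)\cap N_G(v_1)|$ enter $(Q^{2})_{ij}$, and nothing in the common-neighbour bounds you extract from $K_{2,3}$-freeness and Lemma~\ref{le2,1} is shown to be strong enough to absorb them; so the decisive step of the lemma is simply missing. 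The opening remark is also off: Lemma~\ref{le2,4} gives $q(G)\ge\Delta+1=n-1$, which cannot be ``combined'' with an upper estimate to yield the printed bound $q(G)\le n-2$ (it would contradict it); the bound this lemma actually needs, and the only one such an argument can deliver, is $q(G)\le n+2$, the threshold relevant because of Lemma~\ref{le3,2}.

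The paper's own proof is much lighter and purely linear: no $Q^{2}$, no common-neighbour counts, no outer-planarity and no Lemma~\ref{le2,1} are used here. One takes $X$ with $x_{1}=1$, $x_{i}=\frac{1}{k}$ on the $k$ medium vertices and $x_{i}=\frac{3}{n-k-1}$ on the remaining vertices, and verifies $Q(G)X\le (n+2)X$ coordinatewise using only the degree hypotheses: at $v_{1}$ the neighbour sum is at most $1+3$; at a small vertex one splits into $d_{G}(v_{i})\ge k+1$ and $d_{G}(v_{i})\le k$ and uses $d_{G}(v_{i})<\frac{n}{6}+1$; at a medium vertex the row quotient is at most $\bigl(1+\frac{3k}{n-k-1}\bigr)d_{G}(v_{i})-\frac{3k^{2}}{n-k-1}+2k-1$, which is increasing in $k$, so plugging in $k\le 12$ and $d_{G}(v_{i})\le n-61$ (this is exactly where $n\ge 115$ and the thresholds are consumed) gives at most $n+2$. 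Lemma~\ref{le3,3} with the identity polynomial then yields $q(G)\le n+2$. To salvage your route you would have to carry out an explicit rowwise computation of this kind for your quadratic; as written, the proposal leaves the lemma unproved, and the quadratic machinery is in any case unnecessary.
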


\begin{proof}
Note that in $G$, $d_{G}(v_{i})\geq 3$ for $1\leq i \leq n$, $\displaystyle \sum^{n}_{i=1}d_{G}(v_{i})=2(3n-6)$ and $$k(\frac{n}{6}+1)+n-2+3(n-k-1)=(\frac{k}{6}+4)n-2k-5 \hspace{1.3cm}  (\mathrm{if}\ k\geq 13)\hspace{1.3cm} \ \ $$$$\hspace{1.8cm} \ \ \geq 6n+\frac{n}{6}-31>6n-12.$$ Hence, $k\leq 12$.
Let $X=(x_{1}$, $x_{2}$, $x_{3}$, $\ldots$, $x_{n})^{T}$ be a positive vector satisfying that $x_{i}$ corresponds to vertex $v_{i}$ and
 $$x_{i}=\left \{\begin{array}{ll}
 1,\ &
 i=1;\\
\\ \frac{1}{k},\ &
 2\leq i\leq k+1;\\
\\ \frac{3}{n-k-1},\ & k+2\leq i\leq n.\end{array}\right.$$

For $x_{1}$, $$\frac{(n-2)x_{1}+\sum_{v_{j}\sim v_{1}} x_{j}}{x_{1}}\leq n-2+1+\frac{3(n-k-2)}{n-k-1}<n+2.$$
For $x_{i}$ ($k+2\leq i\leq n$), when $d(v_{i})\geq k+1$,
$$\frac{d_{G}(v_{i})x_{i}+\sum_{v_{j}\sim v_{i}} x_{j}}{x_{i}}\leq d_{G}(v_{i})+\frac{\sum^{k+1}_{j=1} x_{j}+\frac{3(d_{G}(v_{i})-k-1)}{n-k-1}}{\frac{3}{n-k-1}}\hspace{0.1cm}$$$$\hspace{2.2cm}\ \leq d_{G}(v_{i})+\frac{2+\frac{3(d_{G}(v_{i})-k-1)}{n-k-1}}{\frac{3}{n-k-1}}$$
$$\hspace{5cm}\leq d_{G}(v_{i})+\frac{2}{3}(n-k-1)+d_{G}(v_{i})-k-1\ $$
$$\hspace{3.6cm}=2d_{G}(v_{i})+\frac{2}{3}n-\frac{5}{3}k-\frac{5}{3}\leq n+2;$$
when $d_{G}(v_{i})\leq k$, $$\frac{d_{G}(v_{i})x_{i}+\sum_{v_{j}\sim v_{i}} x_{j}}{x_{i}}\leq d_{G}(v_{i})+\frac{ \sum^{d_{G}(v_{i})}_{j=1}x_{j}}{\frac{3}{n-k-1}}\hspace{3.2cm}$$$$\hspace{1cm}\leq d_{G}(v_{i})+\frac{1+\frac{d_{G}(v_{i})-1}{k}}{\frac{3}{n-k-1}}\ $$
$$\hspace{5.3cm}=d_{G}(v_{i})+\frac{n-k-1}{3}+\frac{(d_{G}(v_{i})-1)(n-k-1)}{3k}$$
$$\hspace{2cm}<k+\frac{n-k-1}{3}+\frac{n-k-1}{3}$$$$\hspace{1.3cm}=\frac{2(n-1)+k}{3}<n+2.\ $$
For $x_{i}$ ($2\leq i\leq k+1$), noting that $d_{G}(v_{i})\geq\frac{n}{6}+1>k$, we get
$$\ \frac{d_{G}(v_{i})x_{i}+\sum_{v_{j}\sim v_{i}} x_{j}}{x_{i}} =\frac{(d_{G}(v_{i})-1)x_{i}+x_{i}+\sum_{v_{j}\sim v_{i}} x_{j}}{x_{i}}\hspace{4.5cm}$$$$\leq d_{G}(v_{i})-1+\frac{\sum^{k+1}_{j=1} x_{j}+\frac{3(d_{G}(v_{i})-k)}{n-k-1}}{\frac{1}{k}}$$
$$ = d_{G}(v_{i})-1+\frac{2+\frac{3(d_{G}(v_{i})-k)}{n-k-1}}{\frac{1}{k}}\hspace{1.2cm}$$
$$\hspace{3.5cm}=(1+\frac{3k}{n-k-1})d_{G}(v_{i})-\frac{3k^{2}}{n-k-1}+2k-1.\hspace{1cm} (4)$$
Let $\displaystyle f(k)=(1+\frac{3k}{n-k-1})d_{G}(v_{i})-\frac{3k^{2}}{n-k-1}+2k-1$. Taking derivation of $f(k)$ with respect to $k$,
we get $\displaystyle f^{'}(k)=\frac{(n-k-1)(2n-2+3d_{G}(v_{i})-8k)+3kd_{G}(v_{i})-3k^{2}}{(n-k-1)^{2}}$. Note that
$n\geq 115$. As a result, if $d_{G}(v_{i})\geq k$, then $f^{'}(k)> 0$. This implies that $f(k)$ is monotone increasing with respect to $k$. Note that $d_{G}(v_{i})\leq n-61$. Then $(4)<n+2.$

Then $Q(G)X\leq(n+2)X$. By Lemma \ref{le3,3}, $q(G)\leq n+2$.
This completes the proof.
\ \ \ \ \ $\Box$
\end{proof}

\begin{lemma}\label{le3,5} 
Let $G$ be a maximal planar graph with order $n \geq 380$, $d_{G}(v_{1})=\Delta(G) = n -2$, and $\Delta^{'}(G)\geq n-62$. Then $q(G)\leq n+2$.
\end{lemma}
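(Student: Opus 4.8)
The plan is to exhibit a positive vector $X\in R^{n}$ satisfying $Q(G)X\le (n+2)X$ coordinatewise, and then invoke Lemma~\ref{le3,3} with $f$ the identity polynomial to conclude $q(G)\le n+2$; this is legitimate because $Q(G)$ is an irreducible nonnegative matrix, $G$ being connected.

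Fix a vertex $v_1$ with $d_G(v_1)=n-2$ and a vertex $v_2\ne v_1$ with $d_G(v_2)=\Delta'(G)\ge n-62$. The decisive first step is to prove that \emph{every} vertex $v\notin\{v_1,v_2\}$ has $d_G(v)\le 66$. Suppose not, say $d_G(v)\ge 67$. Since $N_G(v_1),N_G(v_2),N_G(v)$ are all subsets of the $n$-element set $V(G)$, two applications of $|A\cap B|\ge|A|+|B|-n$ give
$$\bigl|N_G(v_1)\cap N_G(v_2)\cap N_G(v)\bigr|\ \ge\ d_G(v_1)+d_G(v_2)+d_G(v)-2n\ \ge\ (n-2)+(n-62)+67-2n\ =\ 3 .$$
Any three vertices of this triple intersection are adjacent to each of $v_1,v_2,v$ and, being neighbours of them, are distinct from all three; hence together with $v_1,v_2,v$ they span a $K_{3,3}$ subgraph, contradicting the planarity of $G$. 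This proves the claim; in particular $v_1$ and $v_2$ are the only vertices whose degree can be large.

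I would then take $X=(x_1,\dots,x_n)^{T}$ with $x_1=x_2=1$ and $x_i=\frac{3}{n-3}$ for $3\le i\le n$, and verify $\sum_{u\sim v}x_u\le(n+2-d_G(v))x_v$ for every vertex $v$. For $v=v_1$: among its $n-2$ neighbours at most one is $v_2$ and the rest lie in $\{v_3,\dots,v_n\}$, so $\sum_{u\sim v_1}x_u=1+(n-3)\cdot\frac{3}{n-3}=4=(n+2-(n-2))x_1$ when $v_1\sim v_2$, and $\sum_{u\sim v_1}x_u\le(n-2)\cdot\frac{3}{n-3}<4$ otherwise. The vertex $v_2$ is handled the same way, using $d_G(v_2)\le n-2$ so that the target $n+2-d_G(v_2)$ is at least $4$. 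For $v=v_i$ with $i\ge 3$: at most two neighbours of $v_i$ (namely $v_1$ and $v_2$) have value $1$ and the rest have value $\frac{3}{n-3}$, so in the worst case the inequality to check is
$$2+\bigl(d_G(v_i)-2\bigr)\frac{3}{n-3}\ \le\ \bigl(n+2-d_G(v_i)\bigr)\frac{3}{n-3},$$
which rearranges to $6\,d_G(v_i)\le n+18$; this is true because $d_G(v_i)\le 66$ and $n\ge 380$ (indeed $n\ge 378$ would suffice). Hence $Q(G)X\le(n+2)X$, and Lemma~\ref{le3,3} gives $q(G)\le n+2$.

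The real content is the degree bound $d_G(v)\le 66$ for $v\notin\{v_1,v_2\}$: once two vertices have almost-full degree, the forbidden $K_{3,3}$ pins down the degrees of all remaining vertices, and this is exactly what lets the coarse two-level test vector $X$ succeed. Everything afterwards is elementary arithmetic, needing only a little care about whether $v_1\sim v_2$ and about exact neighbour counts; the hypothesis $n\ge 380$ enters solely through the final inequality $6\,d_G(v_i)\le n+18$.
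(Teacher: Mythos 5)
Your proof is correct, and it reaches the conclusion by a genuinely different route than the paper. The paper fixes a planar embedding, uses the fact that $G^{\circ}(v_{1})$ is a maximal outer-planar graph with Hamiltonian cycle $C_{v_{1}}$, and then splits into the cases $v_{2}\in N_{G}(v_{1})$ and $v_{2}\notin N_{G}(v_{1})$, counting the vertices lying between consecutive neighbours of $v_{2}$ on $C_{v_{1}}$ to bound the degree of every vertex other than $v_{1},v_{2}$ by roughly $n-d_{G}(v_{2})+4\leq 64$; it then verifies $Q(G)X\leq (n+2)X$ for the vector with entries $1,1,\tfrac{3}{n-2}$. You replace all of that embedding analysis by the single observation that three vertices of a planar graph cannot have three common neighbours (a $K_{3,3}$ subgraph), which via $|N(v_{1})\cap N(v_{2})\cap N(v)|\geq d(v_{1})+d(v_{2})+d(v)-2n$ yields the uniform bound $d_{G}(v)\leq 66$ for all $v\notin\{v_{1},v_{2}\}$; the rest is the same template (test vector with entries $1,1,\tfrac{3}{n-3}$ plus Lemma~\ref{le3,3}), and your arithmetic checks out, including the equality case $4=(n+2-(n-2))x_{1}$ when $v_{1}\sim v_{2}$ and the final reduction to $6d_{G}(v_{i})\leq n+18$, which holds since $d_{G}(v_{i})\leq 66$ and $n\geq 380$. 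What your approach buys is brevity and robustness: no case split on adjacency of $v_{1},v_{2}$, no Hamiltonian-cycle bookkeeping, and a slightly cleaner structural reason (forbidden $K_{3,3}$) for why two nearly dominating vertices force all other degrees to be bounded; the paper's embedding argument gives marginally sharper degree bounds ($62$ or $64$ versus your $66$), but that sharpness is not needed at this value of $n$, and indeed your version already works for $n\geq 378$.
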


\begin{proof}
Suppose $d_{G}(v_{2})=\Delta^{'}(G)$. Let $X=(x_{1}$, $x_{2}$, $x_{3}$, $\ldots$, $x_{n})^{T}$ be a positive vector satisfying that $x_{i}$ corresponds to vertex $v_{i}$ and
 $$x_{i}=\left \{\begin{array}{ll}
 1,\ &
 i=1;\\
\\ 1,\ &
 i=2;\\
\\ \frac{3}{n-2},\ & 3\leq i\leq n.\end{array}\right.$$
For $v_{1}$, $$\frac{(n-2)x_{1}+\sum_{v_{j}\sim v_{1}} x_{j}}{x_{1}}\leq n-2+1+\frac{3(n-3)}{n-2}<n+2.$$

Next, there are two cases to consider.

{\bf Case 1} $v_{2}\in N_{G}(v_{1})$. Suppose $N_{G}(v_{1})=\{v_{2}$, $v_{3}$, $\ldots$, $v_{n-2}$, $v_{n-1}\}$.
Suppose $v_{1}$ is in the outer face of $G^{\circ}_{v_{1}}$. Then $v_{n}$ is in one of the inner faces of $G^{\circ}_{v_{1}}$ (see Fig. 3.2).

For $v_{2}$, $$\frac{d_{G}(v_{2})x_{2}+\sum_{v_{j}\sim v_{2}} x_{j}}{x_{2}}\leq d_{G}(v_{2})+1+\frac{3(d_{G}(v_{2})-1)}{n-2}\ \hspace{0.4cm}$$
$$\hspace{4cm} =d_{G}(v_{2})+1+\frac{3d_{G}(v_{2})}{n-2}-\frac{3}{n-2}$$$$\hspace{5.3cm} \leq n+2-\frac{3}{n-2}. \hspace{1cm} (d_{G}(v_{2})\leq n-2)$$
Denote by $C_{v_{1}}=v_{2}v_{3}\cdots v_{n-2}v_{n-1}v_{2}$ the Hamiltonian
cycle in $G^{\circ}_{v_{1}}$. Suppose that $v_{i}s$ ($2\leq i\leq n-1$) are distributed along clockwise direction on $C_{v_{1}}$ and suppose $N_{G^{\circ}_{v_{1}}}(v_{2})=\{v_{2_{1}}$, $v_{2_{2}}$, $\ldots$, $v_{2_{t}}\}$, where for $1\leq i\leq t-1$, $2_{i}<2_{i+1}$, $v_{2_{1}}=v_{3}$, $v_{2_{t}}=v_{n-1}$ (see Fig. 3.2).
For $1\leq j\leq t$, suppose there are $l_{j-1}$ vertices between $v_{2_{j-1}}$ and $v_{2_{j}}$ along clockwise direction on $C_{v_{1}}$, where if $j=1$, we let $v_{2_{0}}=v_{2}$. Along clockwise direction on $C_{v_{1}}$, suppose there are $l_{t}$ vertices between $v_{2_{t}}$ and $v_{2}$.

\setlength{\unitlength}{0.6pt}
\begin{center}
\begin{picture}(532,224)
\put(166,198){\circle*{4}}
\put(156,45){\circle*{4}}
\put(122,41){\circle*{4}}
\put(217,149){\circle*{4}}
\qbezier(122,41)(169,95)(217,149)
\put(93,44){\circle*{4}}
\put(29,103){\circle*{4}}
\qbezier(122,41)(75,72)(29,103)
\put(44,171){\circle*{4}}
\qbezier(122,41)(83,106)(44,171)
\put(145,204){\circle*{4}}
\qbezier(122,41)(133,123)(145,204)
\put(116,26){$v_{2}$}
\put(81,31){$v_{3}$}
\put(7,180){$v_{2_{j-1}}$}
\put(221,150){$v_{2_{j+1}}$}
\put(151,31){$v_{n-1}$}
\put(123,206){\circle*{4}}
\put(435,204){\circle*{4}}
\put(99,203){\circle*{4}}
\put(186,187){\circle*{4}}
\put(187,72){\circle*{4}}
\put(177,64){\circle*{4}}
\put(167,58){\circle*{4}}
\put(96,130){\circle*{4}}
\put(75,62){\circle*{4}}
\put(65,69){\circle*{4}}
\put(56,77){\circle*{4}}
\put(41,146){\circle*{4}}
\put(40,132){\circle*{4}}
\put(39,118){\circle*{4}}
\put(88,190){\circle*{4}}
\put(76,183){\circle*{4}}
\put(62,175){\circle*{4}}
\put(203,156){\circle*{4}}
\put(196,165){\circle*{4}}
\put(190,174){\circle*{4}}
\put(102,125){$v_{n}$}
\qbezier[17](96,130)(120,167)(145,204)
\put(60,-9){Fig. 3.2. $d_{G}(v_{2_{j}})$}
\qbezier(337,123)(337,156)(365,180)\qbezier(365,180)(393,204)(434,204)\qbezier(434,204)(474,204)(502,180)
\qbezier(502,180)(531,156)(531,123)\qbezier(531,123)(531,89)(502,65)\qbezier(502,65)(474,41)(434,41)
\qbezier(434,42)(393,42)(365,65)\qbezier(365,65)(336,89)(337,123)
\put(424,43){\circle*{4}}
\put(337,115){\circle*{4}}
\qbezier(424,43)(380,79)(337,115)
\put(525,96){\circle*{4}}
\qbezier(424,43)(474,70)(525,96)
\put(455,43){\circle*{4}}
\put(391,50){\circle*{4}}
\put(371,73){\circle*{4}}
\put(364,79){\circle*{4}}
\put(359,86){\circle*{4}}
\put(497,72){\circle*{4}}
\put(487,66){\circle*{4}}
\put(478,60){\circle*{4}}
\put(419,29){$v_{2}$}
\put(380,35){$v_{3}$}
\put(309,115){$v_{2_{s}}$}
\put(531,91){$v_{2_{s+1}}$}
\put(452,30){$v_{n-1}$}
\put(430,214){$v_{f}$}
\put(368,182){\circle*{4}}
\put(477,196){\circle*{4}}
\put(394,197){\circle*{4}}
\put(501,181){\circle*{4}}
\put(442,118){\circle*{4}}
\qbezier[17](435,204)(438,161)(442,118)
\put(353,138){\circle*{4}}
\put(357,148){\circle*{4}}
\put(363,156){\circle*{4}}
\put(519,135){\circle*{4}}
\put(515,144){\circle*{4}}
\put(509,153){\circle*{4}}
\put(138,214){$v_{2_{j}}$}
\put(373,207){$v_{f-1}$}
\put(476,203){$v_{f+1}$}
\put(418,121){$v_{n}$}
\put(380,-8){Fig. 3.3. $d_{G}(v_{f})$}
\put(213,130){\circle*{4}}
\put(212,120){\circle*{4}}
\put(211,111){\circle*{4}}
\put(217,98){\circle*{4}}
\qbezier(122,41)(169,70)(217,98)
\put(2,102){$v_{2_{1}}$}
\put(222,93){$v_{2_{t}}$}
\qbezier[23](145,204)(86,139)(44,171)
\qbezier[11](99,203)(104,178)(145,204)
\qbezier(26,123)(26,157)(54,181)\qbezier(54,181)(83,206)(124,206)\qbezier(124,206)(164,206)(193,181)
\qbezier(193,181)(222,157)(222,123)\qbezier(222,123)(222,88)(193,64)\qbezier(193,64)(164,40)(124,40)
\qbezier(124,40)(83,40)(54,64)\qbezier(54,64)(26,88)(26,123)
\qbezier[11](145,204)(174,165)(186,187)
\qbezier[21](145,204)(163,126)(217,149)
\qbezier[15](435,204)(478,158)(501,181)
\qbezier[28](435,204)(469,143)(525,96)
\qbezier[15](435,204)(398,162)(368,182)
\qbezier[28](435,204)(410,125)(337,115)
\end{picture}
\end{center}

For each $v_{2_{j}}$ ($1\leq j\leq t$, see Fig. 3.2), noting that $$l_{j-1}+l_{j}\leq n-3-d_{G}(v_{2})$$ and $d_{G}(v_{2})\geq n-62$, then $$d_{G}(v_{2_{j}})\leq l_{j-1}+l_{j}+5\leq n+2-d_{G}(v_{2})\leq 64,$$ and then $$\displaystyle \frac{d_{G}(v_{2_{j}})x_{2_{j}}+\sum_{v_{k}\sim v_{2_{j}}} x_{k}}{x_{2_{j}}}\leq d_{G}(v_{2_{j}})+\frac{2+\frac{3(d_{G}(v_{2_{j}})-2)}{n-2}}{\frac{3}{n-2}}  \hspace{4cm}$$
$$\ \ \ = 2d_{G}(v_{2_{j}})-2+\frac{2}{3}(n-2)$$$$\hspace{3cm}\ \  \leq 126 +\frac{2}{3}(n-2)\leq n+2.  \hspace{1cm} (n\geq 380)$$
For each $v_{f}\in (N_{G}(v_{1})\backslash \{v_{2}$, $v_{2_{1}}$, $v_{2_{2}}$, $\ldots$, $v_{2_{t}}\})$, then along clockwise direction on $C_{v_{1}}$, there exists $0\leq s\leq t$ such that $v_{f}$ is between $v_{2_{s}}$ and $v_{2_{s+1}}$, where $v_{2_{t+1}}=v_{2}$ (see Fig. 3.3). Note that $$l_{s}\leq n-3-d_{G}(v_{2}).$$ Then $d_{G}(v_{f})\leq l_{s}+3\leq n-d_{G}(v_{2})\leq 62$. As a result, $$\frac{d_{G}(v_{f})x_{f}+\sum_{v_{k}\sim v_{f}} x_{k}}{x_{f}}\leq d_{G}(v_{f})+\frac{2+\frac{3(d_{G}(v_{f})-2)}{n-2}}{\frac{3}{n-2}}  \hspace{3.7cm}$$
$$\ \ \ = 2d_{G}(v_{f})-2+\frac{2}{3}(n-2)$$$$\hspace{3cm}\ \  \leq 122 +\frac{2}{3}(n-2)\leq n+2.  \hspace{1cm} (n\geq 380)$$

\setlength{\unitlength}{0.6pt}
\begin{center}
\begin{picture}(516,206)
\qbezier(4,122)(4,157)(30,181)\qbezier(30,181)(58,206)(97,206)\qbezier(97,206)(135,206)(163,181)
\qbezier(163,181)(190,157)(190,122)\qbezier(190,122)(190,86)(163,62)\qbezier(163,62)(135,37)(97,37)
\qbezier(97,38)(58,38)(30,62)\qbezier(30,62)(3,86)(4,122)
\put(88,38){\circle*{4}}
\put(4,125){\circle*{4}}
\qbezier(88,38)(46,82)(4,125)
\put(189,106){\circle*{4}}
\qbezier(88,38)(138,72)(189,106)
\put(24,174){\circle*{4}}
\qbezier(88,38)(56,106)(24,174)
\put(176,166){\circle*{4}}
\qbezier(88,38)(132,102)(176,166)
\put(23,150){\circle*{4}}
\put(91,122){\circle*{4}}
\put(136,45){\circle*{4}}
\put(45,52){\circle*{4}}
\put(52,195){\circle*{4}}
\qbezier[16](91,122)(71,159)(52,195)
\put(155,188){\circle*{4}}
\qbezier[18](91,122)(123,155)(155,188)
\qbezier[16](91,122)(57,148)(24,174)
\qbezier[19](91,122)(133,144)(176,166)
\qbezier[16](91,122)(89,80)(88,38)
\put(23,137){\circle*{4}}
\put(23,124){\circle*{4}}
\put(121,185){\circle*{4}}
\put(103,187){\circle*{4}}
\put(85,188){\circle*{4}}
\put(179,118){\circle*{4}}
\put(177,131){\circle*{4}}
\put(175,143){\circle*{4}}
\put(83,25){$v_{2}$}
\put(34,39){$v_{3}$}
\put(35,73){\circle*{4}}
\put(27,84){\circle*{4}}
\put(20,95){\circle*{4}}
\put(157,68){\circle*{4}}
\put(164,76){\circle*{4}}
\put(172,85){\circle*{4}}
\put(1,182){$v_{2_{z}}$}
\put(179,167){$v_{2_{z+1}}$}
\put(94,112){$v_{n}$}
\put(133,32){$v_{n-1}$}
\put(34,-9){Fig. 3.4. $d_{G}(v_{n})$}
\qbezier(334,121)(334,154)(359,177)\qbezier(359,177)(385,201)(423,201)\qbezier(423,201)(460,201)(486,177)
\qbezier(486,177)(512,154)(512,121)\qbezier(512,121)(512,87)(486,64)\qbezier(486,64)(460,40)(423,40)
\qbezier(423,41)(385,41)(359,64)\qbezier(359,64)(333,87)(334,121)
\put(469,155){\circle*{4}}
\put(449,167){\circle*{4}}
\put(362,80){\circle*{4}}
\put(465,49){\circle*{4}}
\put(389,47){\circle*{4}}
\put(428,40){\circle*{4}}
\put(424,124){\circle*{4}}
\put(341,90){\circle*{4}}
\qbezier(424,124)(382,107)(341,90)
\put(371,71){\circle*{4}}
\put(381,62){\circle*{4}}
\put(349,166){\circle*{4}}
\qbezier(424,124)(386,145)(349,166)
\put(511,106){\circle*{4}}
\qbezier(424,124)(467,115)(511,106)
\put(428,178){\circle*{4}}
\put(492,89){\circle*{4}}
\put(485,79){\circle*{4}}
\put(476,70){\circle*{4}}
\put(418,109){$v_{2}$}
\put(420,27){$v_{3}$}
\put(380,34){$v_{4}$}
\put(463,36){$v_{n}$}
\put(315,87){$v_{2_{1}}$}
\put(324,172){$v_{2_{2}}$}
\put(514,101){$v_{2_{t}}$}
\put(363,-8){Fig. 3.5. $d_{G}(v_{2})$}
\end{picture}
\end{center}

For $v_{n}$, note that $v_{n}$ is in one of the inner faces of $G^{\circ}_{v_{1}}$. Suppose that in $G^{\circ}_{v_{1}}$, $v_{n}$ is in a face $v_{2}v_{2_{z}}v_{2_{z}+1}v_{2_{z}+2}\cdots v_{2_{z+1}}v_{2}$ (see Fig. 3.4). Note that $l_{z}\leq n-3-d_{G}(v_{2})$ and $d_{G}(v_{n})\leq l_{z}+3$. Then $d_{G}(v_{n})\leq n-d_{G}(v_{2})\leq 62$, and $$\frac{d_{G}(v_{n})x_{n}+\sum_{v_{k}\sim v_{n}} x_{k}}{x_{n}}\leq d_{G}(v_{n})+\frac{1+\frac{3(d_{G}(v_{n})-1)}{n-2}}{\frac{3}{n-2}}  \hspace{3.8cm}$$
$$= 2d_{G}(v_{n})-1+\frac{n-2}{3}\ \ $$$$\hspace{3cm}\ \  \leq 123 +\frac{1}{3}(n-2)< n+2  \hspace{1.1cm} (n\geq 380).$$

{\bf Case 2} $v_{2}\notin N_{G}(v_{1})$. Suppose that $v_{1}$ is in the outer face of $G^{\circ}_{v_{1}}$. Then $v_{2}$ is in one of the inner faces of $G^{\circ}_{v_{1}}$. Then $N_{G}(v_{1})=\{v_{3}$, $v_{4}$, $v_{5}$, $\ldots$, $v_{n-1}$, $v_{n}\}$. Suppose that $C_{v_{1}}=v_{3}v_{4}\cdots v_{n-1}v_{n}v_{3}$ is the Hamiltonian
cycle in $G^{\circ}_{v_{1}}$, $v_{i}s$ ($3\leq i\leq n$) are distributed along clockwise direction on $C_{v_{1}}$, and suppose $N_{G^{\circ}_{v_{1}}}(v_{2})=\{v_{2_{1}}$, $v_{2_{2}}$, $\ldots$, $v_{2_{t}}\}$, where for $1\leq i\leq t-1$, $2_{i}<2_{i+1}$ (see Fig. 3.5). For $2\leq j\leq t$, along clockwise direction on $C_{v_{1}}$, suppose there are $l_{j-1}$ vertices between $v_{2_{j-1}}$ and $v_{2_{j}}$. Suppose that there are $l_{t}$ vertices between $v_{2_{t}}$ and $v_{2_{1}}$. 

For each $v_{2_{j}}$ ($2\leq j\leq t$), noting that $l_{j-1}+l_{j}\leq n-2-d_{G}(v_{2})$ and $d_{G}(v_{2})\geq n-62$, then $d_{G}(v_{2_{j}})\leq l_{j-1}+l_{j}+4\leq n+2-d_{G}(v_{2})\leq 64$; for $v_{2_{1}}$, noting that $l_{1}+l_{t}\leq n-2-d_{G}(v_{2})$, then $d_{G}(v_{2_{1}})\leq l_{1}+l_{t}+4\leq n+2-d_{G}(v_{2})\leq 64$. Then for each $v_{2_{j}}$ ($1\leq j\leq t$), $$\displaystyle \frac{d_{G}(v_{2_{j}})x_{2_{j}}+\sum_{v_{k}\sim v_{2_{j}}} x_{k}}{x_{2_{j}}}\leq d_{G}(v_{2_{j}})+\frac{2+\frac{3(d_{G}(v_{2_{j}})-2)}{n-2}}{\frac{3}{n-2}}  \hspace{4cm}$$
$$\ \ \ = 2d_{G}(v_{2_{j}})-2+\frac{2}{3}(n-2)$$$$\hspace{3cm}\ \  \leq 126 +\frac{2}{3}(n-2)\leq n+2  \hspace{1cm} (n\geq 380).$$
For each $v_{i}\in (N_{G}(v_{1})\backslash \{v_{2_{1}}$, $v_{2_{2}}$, $\ldots$, $v_{2_{t}}\})$, along clockwise direction on $C_{v_{1}}$, $v_{i}$ is between $v_{2_{k}}$ and $v_{2_{k+1}}$ for some $1\leq k \leq t-1$, or $v_{i}$ is between $v_{2_{t}}$ and $v_{2_{1}}$. Then for some $1\leq k\leq t$, $d_{G}(v_{i})\leq l_{k}+2$. Note that $l_{k}\leq n-2-d_{G}(v_{2})$. Then $d_{G}(v_{i})\leq n-d_{G}(v_{2})\leq 62$, and $$\displaystyle \frac{d_{G}(v_{i})x_{i}+\sum_{v_{k}\sim v_{i}} x_{k}}{x_{i}}\leq d_{G}(v_{i})+\frac{1+\frac{3(d_{G}(v_{i})-1)}{n-2}}{\frac{3}{n-2}}  \hspace{3.8cm}$$
$$\ = 2d_{G}(v_{i})-1+\frac{1}{3}(n-2)$$$$\hspace{3cm}\, \leq 123 +\frac{1}{3}(n-2)< n+2  \hspace{1cm} (n\geq 380).$$ For $v_{2}$, $$\displaystyle \frac{d_{G}(v_{2})x_{2}+\sum_{v_{k}\sim v_{2}} x_{k}}{x_{2}}\leq d_{G}(v_{2})+\frac{3d_{G}(v_{2})}{n-2}\leq n+1.\hspace{1cm} (d_{G}(v_{2})\leq n-2)$$

By above discussion, we get $Q(G)X\leq (n+2)X$. By Lemma \ref{le3,3}, we get that $q(G)\leq n+2$.
This completes the proof.
\ \ \ \ \ $\Box$
\end{proof}

\begin{lemma}\label{le3,6} 
Let $G$ be a maximal planar graph with order $n \geq 4$ and $d(v_{1})=\Delta(G) = n -2$. If for $2\leq i\leq n$, $d_{G}(v_{i})< 1+\frac{n}{6}.$ Then $q(G)\leq n-2$.
\end{lemma}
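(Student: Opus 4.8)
The plan is to treat Lemma \ref{le3,6} as the degenerate case of the situation analyzed in Lemma \ref{le3,4}: it is exactly the instance in which the parameter $k$ there equals $0$, i.e.\ there are no vertices of intermediate degree at all. Accordingly I would run the same argument --- exhibit an explicit nonnegative vector $X\neq 0$ with $Q(G)X\le (n+2)X$ in the sense of Lemma \ref{le3,3,0}, and then apply Lemma \ref{le3,3} with the identity polynomial to conclude $q(G)\le n+2$ --- and the point is that with no intermediate tier present this goes through for all $n\ge 4$, with better constants than those needed in Lemma \ref{le3,4}. Concretely, label the vertices so that $d_G(v_1)=\Delta(G)=n-2$ and set $x_1=1$ and $x_i=\frac{3}{n-1}$ for $2\le i\le n$. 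Since $n\ge 4$ this vector is positive, and $Q(G)$ is irreducible because a maximal planar graph on $n\ge 4$ vertices is connected.

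Next I would verify the row inequalities $d_G(v_i)x_i+\sum_{v_j\sim v_i}x_j\le (n+2)x_i$ one vertex class at a time. For $v_1$: all $n-2$ of its neighbours carry weight $\frac{3}{n-1}$, so the left-hand side equals $(n-2)+\frac{3(n-2)}{n-1}=(n-2)\frac{n+2}{n-1}<n+2=(n+2)x_1$. For $v_i$ with $2\le i\le n$: at most one neighbour of $v_i$, namely $v_1$ if it is present, carries weight $1$, while every other neighbour carries weight $\frac{3}{n-1}$; hence $\sum_{v_j\sim v_i}x_j\le 1+(d_G(v_i)-1)\frac{3}{n-1}$, and this bound also covers the single vertex that is not adjacent to $v_1$ (there the sum equals $d_G(v_i)\frac{3}{n-1}$, which is no larger once $n\ge 4$). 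Dividing through by $x_i=\frac{3}{n-1}$, the resulting quotient is at most $2d_G(v_i)-1+\frac{n-1}{3}$, and the hypothesis $d_G(v_i)<1+\frac n6$ gives $2d_G(v_i)<\frac{n+6}{3}\le\frac{2n+10}{3}$, whence $2d_G(v_i)-1+\frac{n-1}{3}<n+2$.

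Combining the two cases gives $Q(G)X\le (n+2)X$, so Lemma \ref{le3,3} yields $q(G)\le n+2$, the same bound obtained in Lemmas \ref{le3,4} and \ref{le3,5}. I do not expect any genuine obstacle here: the only point requiring a moment's care is checking that the uniform estimate $\sum_{v_j\sim v_i}x_j\le 1+(d_G(v_i)-1)\frac{3}{n-1}$ stays valid for the unique non-neighbour of $v_1$ --- which is immediate for $n\ge 4$ --- together with the trivial comparison $n+6\le 2n+10$ that lets the degree hypothesis $d_G(v_i)<1+\frac n6$ close the estimate. The reason the order threshold drops from $n\ge 115$ in Lemma \ref{le3,4} to $n\ge 4$ here is simply that, with $k=0$, the constant $2$ in that proof's bound (which there is $x_1$ plus the total weight $1$ of the intermediate tier) becomes $1$.
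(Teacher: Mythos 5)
Your proposal is correct and follows essentially the same route as the paper's own proof: a two-tier test vector with $x_1=1$ and equal small weights on the remaining vertices, the row-by-row verification of $Q(G)X\le (n+2)X$, and an appeal to Lemma \ref{le3,3}; the only difference is the harmless choice of constant ($\frac{3}{n-1}$ versus the paper's $\frac{4}{n-1}$). Note that the stated conclusion ``$q(G)\le n-2$'' is a typo in the paper (it contradicts Lemma \ref{le2,4}); both your argument and the paper's actually establish $q(G)\le n+2$, which is the bound used later.
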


\begin{proof}
 Let $X=(x_{1}$, $x_{2}$, $x_{3}$, $\ldots$, $x_{n})^{T}$ be a positive vector satisfying that $x_{i}$ corresponds to vertex $v_{i}$ and
 $$x_{i}=\left \{\begin{array}{ll}
 1,\ &
 i=1;\\
\\  \frac{4}{n-1},\ & 2\leq i\leq n.\end{array}\right.$$
For $v_{1}$, $$\frac{(n-2)x_{1}+\sum_{v_{j}\sim v_{1}} x_{j}}{x_{1}}\leq n-2+\frac{4(n-2)}{n-1}<n+2.$$
For $v_{i}$ ($2\leq i\leq n$), $$\frac{d_{G}(v_{i})x_{i}+\sum_{v_{j}\sim v_{i}} x_{j}}{x_{i}}\leq d_{G}(v_{i})+\frac{1+\frac{4(d_{G}(v_{i})-1)}{n-1}}{\frac{4}{n-1}}\ \hspace{1cm}$$$$\ \ \hspace{2.1cm}\leq 2d_{G}(v_{i})-1+\frac{n-1}{4}\ $$$$\ \ \hspace{1.3cm}< \frac{7n}{12}+\frac{3}{4}<n+2.$$
As a result, $Q(G)X\leq (n+2)X$. By Lemma \ref{le3,3}, we get that $q(G)\leq n+2$.
This completes the proof.
\ \ \ \ \ $\Box$
\end{proof}

\begin{theorem}\label{th3,7} 
Let $G$ be a maximal planar graph with order $n \geq 380$ and $\Delta(G) = n -2$. Then $q(G)\leq n+2$.
\end{theorem}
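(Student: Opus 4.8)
The plan is to obtain Theorem~\ref{th3,7} as a short case analysis that routes every maximal planar graph with $\Delta(G)=n-2$ into exactly one of Lemmas~\ref{le3,4}, \ref{le3,5}, \ref{le3,6}. Fix a vertex $v_{1}$ with $d_{G}(v_{1})=\Delta(G)=n-2$, and split the argument according to the second largest degree $\Delta^{'}(G)$.

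First, suppose $\Delta^{'}(G)\geq n-62$. Then the hypotheses of Lemma~\ref{le3,5} are satisfied --- and its order requirement $n\geq 380$ is exactly the hypothesis of the theorem --- so $q(G)\leq n+2$ immediately.

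Otherwise $\Delta^{'}(G)\leq n-63$, so every vertex other than $v_{1}$ has degree at most $n-63\leq n-61$. Let $k$ be the number of vertices $v_{i}$ with $i\neq 1$ and $d_{G}(v_{i})\geq \tfrac{n}{6}+1$; note $\tfrac{n}{6}+1<n-61$ because $n\geq 380$, so this lower cut-off is consistent with the available upper bound $n-61$. If $k=0$, then $d_{G}(v_{i})<1+\tfrac{n}{6}$ for all $i\geq 2$ and Lemma~\ref{le3,6} gives $q(G)\leq n+2$. If $k\geq 1$, relabel so that these vertices are $v_{2},\ldots,v_{k+1}$; using $d_{G}(v_{i})\geq 3$ for all $i$ together with $\sum_{i=1}^{n}d_{G}(v_{i})=2(3n-6)=6n-12$, the same counting estimate carried out at the start of the proof of Lemma~\ref{le3,4} shows that $k\geq 13$ would force the degree sum to exceed $6n-12$, so $k\leq 12$. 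Hence $1\leq k\leq 12$, each $v_{i}$ with $2\leq i\leq k+1$ satisfies $\tfrac{n}{6}+1\leq d_{G}(v_{i})\leq n-61$, and each $v_{i}$ with $i\geq k+2$ satisfies $d_{G}(v_{i})<\tfrac{n}{6}+1$, so Lemma~\ref{le3,4} applies (its bound $n\geq 115$ follows from $n\geq 380$) and again $q(G)\leq n+2$. In every case $q(G)\leq n+2$, as claimed.

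Since the analytic effort is entirely absorbed by Lemmas~\ref{le3,4}--\ref{le3,6}, there is no real obstacle here beyond bookkeeping. The one place that deserves care is verifying that the three regimes --- $\Delta^{'}(G)\geq n-62$; $\Delta^{'}(G)\leq n-63$ with at least one vertex of ``middle'' degree in $[\tfrac{n}{6}+1,n-61]$; and $\Delta^{'}(G)\leq n-63$ with no such vertex --- genuinely exhaust all possibilities, i.e.\ that the boundary constants $n-61$, $n-62$, $n-63$ and the threshold $\tfrac{n}{6}+1$ mesh so that no graph slips between the cases, and that $n\geq 380$ simultaneously validates the order hypotheses $n\geq 115$, $n\geq 380$, $n\geq 4$ of the three lemmas.
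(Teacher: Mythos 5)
Your proposal is correct and matches the paper's approach: the paper proves Theorem~\ref{th3,7} by simply invoking Lemmas~\ref{le3,4}--\ref{le3,6}, and your case split on $\Delta^{'}(G)$ (at the threshold $n-62$ versus $n-63$, then $k=0$ versus $1\leq k\leq 12$) is exactly the routing that makes those lemmas exhaustive, with the order hypotheses checked correctly.
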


\begin{proof}
This theorem follows from Lemmas \ref{le3,4}-\ref{le3,6}.
\ \ \ \ \ $\Box$
\end{proof}

\begin{lemma}\label{le3,8} 
Let $G$ be a maximal planar graph with order $n \geq 91$ and $d_{G}(v_{1})=\Delta(G) = n-1$. There are exactly $1\leq k\leq 13$ vertices $v_{2}$, $v_{3}$, $\ldots$, $v_{k+1}$ in $G$ such that $$\frac{n}{7}+\frac{19}{7}\leq d(v_{i})\leq n-75.$$ For $k+2\leq i\leq n$, $d_{G}(v_{i})<\frac{n}{7}+\frac{19}{7}$. Then $q(G)\leq n+2$.
\end{lemma}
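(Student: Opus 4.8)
The plan is to imitate the proof of Lemma~\ref{le3,4}, with the degree $n-2$ there replaced by $n-1$. Since $d_G(v_1)=n-1$, the vertex $v_1$ is adjacent to every other vertex, i.e.\ $G=v_1\nabla G^{\circ}(v_1)$ with $G^{\circ}(v_1)$ a maximal outer-planar graph on $n-1$ vertices; as recalled in Section~2 this gives $\delta(G)\geq 3$ and $\sum_{i=1}^{n}d_G(v_i)=2m(G)=6n-12$. I would first record $k\leq 13$: if $k\geq 14$, then summing the a priori degree bounds yields
$$\sum_{i=1}^{n}d_G(v_i)\ \geq\ (n-1)+k\left(\frac{n}{7}+\frac{19}{7}\right)+3(n-k-1)\ =\ 4n-4+\frac{k(n-2)}{7},$$
which at $k=14$ already equals $6n-8>6n-12$, a contradiction. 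I would also note that the hypothesis only makes sense when the large-vertex window $[\frac{n}{7}+\frac{19}{7},\,n-75]$ is nonempty, which requires $7(n-75)\geq n+19$, i.e.\ $n\geq 91$.

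The core step is to produce a positive vector $X$ with $Q(G)X\leq (n+2)X$ componentwise and then invoke Lemma~\ref{le3,3}. Because $v_1$ is dominating, its weight appears in every other row of $Q(G)X$, so the weights of the remaining vertices must be chosen more delicately than in Lemma~\ref{le3,4}; I would take
$$x_{i}=\left\{\begin{array}{ll} 1, & i=1;\\ \frac{2}{3k}, & 2\leq i\leq k+1;\\ \frac{7}{3(n-k-1)}, & k+2\leq i\leq n.\end{array}\right.$$
With this choice the $k$ large vertices carry total weight $\frac{2}{3}$, the $n-k-1$ small vertices carry total weight $\frac{7}{3}$, and the non-$v_1$ weights sum to exactly $3$; hence the $v_1$-row of $Q(G)X$ is $(n-1)\cdot 1+3=n+2$, as required (one may rescale to the integer-weight vector $x_1=3$, $x_i=2/k$, $x_i=7/(n-k-1)$).

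It remains to check $(Q(G)X)_i\leq (n+2)x_i$ for every $i\geq 2$. For a small vertex $v_i$ ($k+2\leq i\leq n$) I would split as in Lemma~\ref{le3,4} according to $d_G(v_i)\geq k+1$ or $d_G(v_i)\leq k$; in the worst case $v_i$ is joined to $v_1$, to as many large vertices as its degree allows, and to small vertices otherwise, and after dividing by $x_i$ the required inequality becomes an affine inequality in $d_G(v_i)$ that, using $d_G(v_i)<\frac{n}{7}+\frac{19}{7}$, holds for all $k\geq 1$. For a large vertex $v_i$ ($2\leq i\leq k+1$) one has $d_G(v_i)\geq\frac{n}{7}+\frac{19}{7}>k$ (here $n\geq 91$ and $k\leq 13$ are used), so in the worst case $v_i$ is joined to $v_1$, to the other $k-1$ large vertices, and to $d_G(v_i)-k$ small vertices; dividing by $x_i$ produces a bound of the form $\alpha(n,k)\,d_G(v_i)+\beta(n,k)$ with $\alpha(n,k)>1$, which, exactly as in Lemma~\ref{le3,4}, I would handle by checking monotonicity in $d_G(v_i)$ and in $k$, so that it suffices to verify the inequality at $d_G(v_i)=n-75$ and $k=13$, where it reduces to the evident fact that $\frac{n-75-k}{n-k-1}<1$.

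The main obstacle I anticipate is this calibration. The large-vertex weight must be small enough (here $\frac{2}{3k}$) that, after each small vertex absorbs the unit weight of $v_1$, its row inequality still survives up to degree $\frac{n}{7}+\frac{19}{7}$; yet it must be large enough that the large-vertex rows survive up to degree $n-75$; and the thresholds $\frac{n}{7}+\frac{19}{7}$, $n-75$ together with $n\geq 91$ are tuned precisely so that both demands, and the $v_1$-row equality, hold simultaneously. Once the vector $X$ is pinned down, the three families of verifications are routine one-variable monotonicity/convexity checks in $d_G(v_i)$ together with a monotonicity check in $k$, and Lemma~\ref{le3,3} then gives $q(G)\leq n+2$.
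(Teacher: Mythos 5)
Your proposal is correct and follows essentially the same route as the paper: the same counting argument for $k\leq 13$, the identical test vector $x_1=1$, $x_i=\frac{2}{3k}$, $x_i=\frac{7}{3(n-k-1)}$, the same row-by-row verification of $Q(G)X\leq (n+2)X$ (with the large-vertex row reducing to the paper's bound $d_G(v_i)-1+\frac{5}{2}k+\frac{7k(d_G(v_i)-k)}{2(n-k-1)}$ and the check at $k=13$, $d_G(v_i)=n-75$), and the same appeal to Lemma~\ref{le3,3}.
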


\begin{proof}
Note that in $G$, $d_{G}(v_{i})\geq 3$ for $1\leq i \leq n$, $\displaystyle \sum^{n}_{i=1}d_{G}(v_{i})=2(3n-6)$, and note that if $k\geq 14$, $$k(\frac{n}{7}+\frac{19}{7})+n-1+3(n-k-1)\geq 14(\frac{n}{7}+\frac{19}{7})+n-1+3(n-15)>6n-12.$$ Hence, $k\leq 13$.
Let $X=(x_{1}$, $x_{2}$, $x_{3}$, $\ldots$, $x_{n})^{T}$ be a positive vector satisfying that $x_{i}$ corresponds to vertex $v_{i}$ and
 $$x_{i}=\left \{\begin{array}{ll}
 1,\ &
 i=1;\\
\\ \frac{2}{3k},\ &
 2\leq i\leq k+1;\\
\\ \frac{7}{3(n-k-1)},\ & k+2\leq i\leq n.\end{array}\right.$$
For $v_{1}$, $$\frac{(n-1)x_{1}+\sum_{v_{j}\sim v_{1}} x_{j}}{x_{1}}= n-1+3=n+2.$$
For $v_{i}$ ($k+2\leq i\leq n$), if $d_{G}(v_{i})\geq k+1$, then $$\frac{d_{G}(v_{i})x_{i}+\sum_{v_{j}\sim v_{i}} x_{j}}{x_{i}}\leq d_{G}(v_{i})+\frac{\frac{5}{3}+\frac{7(d_{G}(v_{i})-k-1)}{3(n-k-1)}}{\frac{7}{3(n-k-1)}}\ \ \hspace{4.1cm}$$$$\ \ \hspace{0.3cm}\leq 2d_{G}(v_{i})-k-1+\frac{5}{7}(n-k-1)$$$$\ \ \hspace{1cm}\leq 2d_{G}(v_{i})-\frac{12k}{7}-\frac{12}{7}+\frac{5}{7}n\leq n+2;$$
if $d_{G}(v_{i})\leq k$, then $$\frac{d_{G}(v_{i})x_{i}+\sum_{v_{j}\sim v_{i}} x_{j}}{x_{i}}\leq d_{G}(v_{i})+\frac{\frac{5}{3}}{\frac{7}{3(n-k-1)}}\ \ \hspace{4.1cm}$$$$\ \ \hspace{1.65cm}\leq d_{G}(v_{i})+\frac{5}{7}(n-k-1)< n+2.$$
For $v_{i}$ ($2\leq i\leq k+1$), $d_{G}(v_{i})\geq k$, then $$\frac{d_{G}(v_{i})x_{i}+\sum_{v_{j}\sim v_{i}} x_{j}}{x_{i}}=\frac{(d_{G}(v_{i})-1)x_{i}+x_{i}+\sum_{v_{j}\sim v_{i}} x_{j}}{x_{i}}\ \ \hspace{3.1cm}$$
$$\ \ \hspace{1cm}\leq d_{G}(v_{i})-1+\frac{\sum^{k+1}_{j=1} x_{j}+\frac{7(d_{G}(v_{i})-k)}{3(n-k-1)}}{\frac{2}{3k}}$$$$\leq d_{G}(v_{i})-1+\frac{\frac{5}{3}+\frac{7(d_{G}(v_{i})-k)}{3(n-k-1)}}{\frac{2}{3k}}$$$$\ \ \ \, \hspace{2.9cm}\leq d_{G}(v_{i})-1+\frac{5}{2}k+\frac{\frac{7}{2}k(d_{G}(v_{i})-k)}{n-k-1}.\hspace{1.5cm} (5)$$
As the proof of Lemma \ref{le3,4}, noting that $d_{G}(v_{i})\leq n-75$, we can prove that $(5)\leq n+2.$
Then $Q(G)X\leq (n+2)X$. By Lemma \ref{le3,3}, we get that $q(G)\leq n+2$.
This completes the proof.
\ \ \ \ \ $\Box$
\end{proof}

\begin{lemma}\label{le3,9} 
Let $G$ be a maximal planar graph with order $n \geq 6$ and $d(v_{1})=\Delta(G) = n-1$. If for $2\leq i\leq n$,  $d_{G}(v_{i})<\frac{n}{7}+\frac{19}{7}$. Then $q(G)\leq n+2$.
\end{lemma}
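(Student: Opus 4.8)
The plan is to copy the scheme of Lemma~\ref{le3,6} (and Lemmas~\ref{le3,4}, \ref{le3,5}): produce an explicit positive vector $X$ with $Q(G)X\le (n+2)X$ coordinatewise, and then quote Lemma~\ref{le3,3} with $f$ the identity polynomial — $Q(G)$ is irreducible and nonnegative because $G$ is connected — to conclude $q(\rho)=q(G)\le n+2$. Together with Lemma~\ref{le3,8}, this will dispose of the case $\Delta(G)=n-1$, the companion of Theorem~\ref{th3,7}.

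The one structural fact I would record first is that, since $d_G(v_1)=\Delta(G)=n-1$, the vertex $v_1$ is adjacent to every other vertex, so $v_1\in N_G(v_i)$ for all $2\le i\le n$; hence in the $i$-th row sum ($i\ge 2$) the entry $x_1$ always occurs among the neighbour terms. Then I would take $x_1=1$ and $x_i=\tfrac{3}{n-1}$ for $2\le i\le n$, which is a positive vector for $n\ge 6$. For $v_1$ one gets $\dfrac{(n-1)x_1+\sum_{v_j\sim v_1}x_j}{x_1}=n-1+(n-1)\cdot\tfrac{3}{n-1}=n+2$, so that row is tight. For $2\le i\le n$, separating the contribution of $v_1$,
$$\frac{d_G(v_i)x_i+\sum_{v_j\sim v_i}x_j}{x_i}=d_G(v_i)+\frac{1+(d_G(v_i)-1)\cdot\frac{3}{n-1}}{\frac{3}{n-1}}=2d_G(v_i)-1+\frac{n-1}{3},$$
and I would check this is $\le n+2$. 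Clearing the denominator, the inequality is equivalent to $d_G(v_i)\le \tfrac{n+5}{3}$, whereas the hypothesis gives $d_G(v_i)<\tfrac{n}{7}+\tfrac{19}{7}=\tfrac{n+19}{7}$; since $\tfrac{n+19}{7}\le \tfrac{n+5}{3}$ exactly when $n\ge \tfrac{11}{2}$, this holds for all $n\ge 6$. Thus $Q(G)X\le (n+2)X$, and Lemma~\ref{le3,3} yields $q(G)\le n+2$.

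There is essentially no obstacle here: once the test vector is fixed the argument is a one-line degree estimate. The only point requiring a moment's care is the calibration of $x_i$ — choosing it so that the $v_1$-row comes out exactly equal to $n+2$ forces $x_i=\tfrac{3}{n-1}$, and one must then confirm that the hypothesis bound $\tfrac{n}{7}+\tfrac{19}{7}$ on the remaining degrees is still tight enough to keep every other row under $n+2$; the threshold $n\ge 6$ in the statement is precisely what the comparison $\tfrac{n+19}{7}\le\tfrac{n+5}{3}$ requires.
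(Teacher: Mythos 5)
Your proposal is correct and matches the paper's own proof essentially verbatim: the same test vector $x_1=1$, $x_i=\tfrac{3}{n-1}$ for $2\le i\le n$, the same tight row at $v_1$ giving exactly $n+2$, and the same degree estimate for the remaining rows (the paper plugs $d_G(v_i)<\tfrac{n}{7}+\tfrac{19}{7}$ into $2d_G(v_i)-1+\tfrac{n-1}{3}$ directly, which reduces to the same threshold $n\ge\tfrac{11}{2}$ you obtain), followed by the application of Lemma~\ref{le3,3}.
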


\begin{proof}
Let $X=(x_{1}$, $x_{2}$, $x_{3}$, $\ldots$, $x_{n})^{T}$ be a positive vector satisfying that $x_{i}$ corresponds to vertex $v_{i}$ and
 $$x_{i}=\left \{\begin{array}{ll}
 1,\ &
 i=1;\\
\\  \frac{3}{n-1},\ & 2\leq i\leq n.\end{array}\right.$$
For $v_{1}$, $$\frac{(n-1)x_{1}+\sum_{v_{j}\sim v_{1}} x_{j}}{x_{1}}= n-1+3=n+2.$$
For $v_{i}$ ($2\leq i\leq n$), $$\frac{d_{G}(v_{i})x_{i}+\sum_{v_{j}\sim v_{i}} x_{j}}{x_{i}}\leq d_{G}(v_{i})+\frac{1+\frac{3(d_{G}(v_{i})-1)}{n-1}}{\frac{3}{n-1}}$$$$\ \ \hspace{3cm}=2d_{G}(v_{i})-1+\frac{n-1}{3}$$
$$\ \ \hspace{2.5cm}< \frac{2n}{7}+\frac{31}{7}+\frac{n-1}{3}$$$$\ \ \hspace{2.9cm}\leq n+2. \hspace{1cm} (n\geq 6)$$

As a result, $Q(G)X\leq (n+2)X$. By Lemma \ref{le3,3}, we get that $q(G)\leq n+2$.
This completes the proof.
\ \ \ \ \ $\Box$
\end{proof}

\begin{lemma}\label{le3,10} 
Let $G$ be a maximal planar graph with order $n \geq 461$, $d_{G}(v_{1})=\Delta(G) = n-1$ and $n-81\leq\Delta^{'}(G)\leq n-4$. Then $q(G)\leq n+2$.
\end{lemma}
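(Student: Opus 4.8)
The plan is to adapt the argument of Lemma~\ref{le3,5}, Case~1, to the case $\Delta(G)=n-1$. Since $d_G(v_1)=n-1$, the vertex $v_1$ is adjacent to every other vertex, so $N_G[v_1]=V(G)$ and $G^{\circ}(v_1)=G[\{v_2,v_3,\dots,v_n\}]$ is a maximal outer-planar graph on $n-1$ vertices; by the facts recalled in Section~2 its outer boundary is a Hamiltonian cycle $C_{v_1}$ running through $v_2,v_3,\dots,v_n$, and for every $i\ge 2$ we have $d_G(v_i)=d_{G^{\circ}(v_1)}(v_i)+1$ because all neighbours of $v_i$ other than $v_1$ lie in $G^{\circ}(v_1)$. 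Choose $v_2$ so that $d_G(v_2)=\Delta^{'}(G)$; thus $n-81\le d_G(v_2)\le n-4$, and $v_2$ has $d_G(v_2)-1\ge n-82$ neighbours on $C_{v_1}$.

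The first step is to bound the degrees of $v_3,\dots,v_n$ by an absolute constant. The edges of $G^{\circ}(v_1)$ incident to $v_2$ cut the polygon bounded by $C_{v_1}$ into fans with apex $v_2$, exactly as in Fig.~3.2. Let $l_j$ denote the number of vertices lying strictly between the $j$-th and the $(j{+}1)$-st neighbour of $v_2$ encountered along $C_{v_1}$; then the number of vertices of $C_{v_1}$ not adjacent to $v_2$ equals $\sum_j l_j=(n-1)-1-(d_G(v_2)-1)=n-1-d_G(v_2)\le 80$. Since $G^{\circ}(v_1)$ is outer-planar, no edge crosses an edge of $v_2$, so a vertex lying strictly inside the $j$-th fan has all of its $G^{\circ}(v_1)$-neighbours in that fan, hence $d_G$-degree at most $l_j+3\le 83$; a neighbour of $v_2$ interior to the cyclic neighbour-order of $v_2$ lies on the common boundary of two consecutive fans and has $d_G$-degree at most $l_{j-1}+l_j+4\le 84$; and the two cycle-neighbours $v_3,v_n$ of $v_2$ satisfy even smaller bounds. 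Hence $d_G(v_i)\le 84$ for all $3\le i\le n$.

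The second step is a test-vector estimate. Let $X=(x_1,\dots,x_n)^{T}$ be the positive vector with $x_1=1$, $x_2=a$, and $x_i=b$ for $3\le i\le n$, where $a$ is a constant a little below $\frac35$ and $b=\frac{1+a}{n-164}$; a convenient choice is $a=\frac47$ (so $b=\frac{11}{7(n-164)}$). I would then verify $Q(G)X\le(n+2)X$ row by row. For $v_1$ this reads $n-1+a+(n-2)b\le n+2$, i.e.\ $a+(n-2)b\le 3$. For $v_2$, since every neighbour of $v_2$ except $v_1$ carries weight $b$, it reads $d_G(v_2)+\frac{1+(d_G(v_2)-1)b}{a}\le n+2$, whose worst case $d_G(v_2)=n-4$ is $1+(n-5)b\le 6a$. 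For $v_i$ with $3\le i\le n$, the worst case is a vertex adjacent to both $v_1$ and $v_2$ (this estimate dominates the true row-sum whether or not $v_i$ is actually adjacent to $v_2$, since $b\le a$), for which it reads $2d_G(v_i)-2+\frac{1+a}{b}\le n+2$, and since $d_G(v_i)\le 84$ it suffices that $\frac{1+a}{b}\le n-164$. With $b=\frac{1+a}{n-164}$ the third requirement is an equality, and the first two become $a+\frac{(n-2)(1+a)}{n-164}\le 3$ and $6a\ge 1+\frac{(n-5)(1+a)}{n-164}$; a short computation shows these are compatible for all $n\ge 461$ (with $a=\frac47$ all three hold, two of them with equality near the threshold). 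Then $Q(G)X\le(n+2)X$, and Lemma~\ref{le3,3}, applied with $f$ the identity polynomial, gives $q(G)\le n+2$.

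The conceptual heart is the fan-decomposition bound of the first step, but it is routine given Lemma~\ref{le3,5}, Case~1 — indeed it is cleaner here, because $v_1$ being adjacent to everything means there is no vertex hidden inside a face of $G^{\circ}(v_1)$. The real difficulty is the arithmetic of the second step: the $v_1$-inequality (made large by $d_G(v_1)=n-1$) pushes $a$ down, the $v_2$-inequality (with $d_G(v_2)$ as large as $n-4$) pushes $a$ up, and one must check that the resulting window for $a$ is non-empty for every $n\ge 461$ — this is precisely where the hypothesis $n\ge 461$ enters. Should three weights prove too tight, the natural remedy is to give the neighbours of $v_2$ among $v_3,\dots,v_n$ a slightly larger weight than the non-neighbours, but I expect the three-weight vector above to suffice.
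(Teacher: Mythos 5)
Your proposal is correct and follows essentially the same route as the paper's own proof of Lemma~\ref{le3,10}: the fan decomposition of $C_{v_1}$ around $v_2$ to bound $d_G(v_i)$ for $i\ge 3$ by a constant, followed by a three-level test vector with $x_1=1$, $x_2=\frac47$ and a small uniform weight on the remaining vertices, concluded via Lemma~\ref{le3,3}. The only differences are immaterial constants (the paper takes $x_i=\frac{17}{7(n-2)}$ and degree bounds $83/81$ where you take $\frac{11}{7(n-164)}$ and $84/83$), and both versions need $n\ge 461$ at exactly one row of the verification.
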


\begin{proof}
Suppose $d_{G}(v_{2})=\Delta^{'}(G)$. Let $X=(x_{1}$, $x_{2}$, $x_{3}$, $\ldots$, $x_{n})^{T}$ be a positive vector satisfying that $x_{i}$ corresponds to vertex $v_{i}$ and
 $$x_{i}=\left \{\begin{array}{ll}
 1,\ &
 i=1;\\
\\ \frac{4}{7},\ &
i=2;\\
\\ \frac{17}{7(n-2)},\ & 3\leq i\leq n.\end{array}\right.$$
For $v_{1}$, $$\frac{(n-1)x_{1}+\sum_{v_{j}\sim v_{1}} x_{j}}{x_{1}}= n-1+3=n+2.$$
For $v_{2}$, $$\frac{d_{G}(v_{2})x_{2}+\sum_{v_{j}\sim v_{2}} x_{j}}{x_{2}}\leq d_{G}(v_{2})+\frac{1+\frac{17(d_{G}(v_{2})-1)}{7(n-2)}}{\frac{4}{7}}\ \ \hspace{2.1cm}$$$$\ \ \hspace{2.1cm}=d_{G}(v_{2})+\frac{7}{4}+
\frac{17(d_{G}(v_{2})-1)}{4(n-2)}$$$$\ \ \hspace{2cm}<n+2. \hspace{1cm} (d_{G}(v_{2})\leq n-4)$$

Suppose that $v_{1}$ is in the outer face of $G^{\circ}_{v_{1}}$, $C_{v_{1}}=v_{2}v_{3}\cdots v_{n-1}v_{n}v_{2}$ is the Hamiltonian cycle in $G^{\circ}_{v_{1}}$,  $v_{i}s$ ($2\leq i\leq n$) are distributed along clockwise direction on $C_{v_{1}}$, and suppose $N_{G^{\circ}_{v_{1}}}(v_{2})=\{v_{2_{1}}$, $v_{2_{2}}$, $\ldots$, $v_{2_{t}}\}$, where for $1\leq i\leq t-1$, $2_{i}<2_{i+1}$, $v_{2_{1}}=v_{3}$, $v_{2_{t}}=v_{n}$. On $C_{v_{1}}$, along clockwise direction, for $1\leq j\leq t$, suppose that there are $l_{j-1}$ vertices between $v_{2_{j-1}}$ and $v_{2_{j}}$, where if $j=1$, we let $v_{2_{0}}=v_{2}$. Along clockwise direction on $C_{v_{1}}$, suppose that there are $l_{t}$ vertices between $v_{2_{t}}$ and $v_{2}$. For each $v_{2_{j}}$ ($1\leq j\leq t$), noting that $l_{j-1}+l_{j}\leq n-2-d_{G}(v_{2})$ and $d_{G}(v_{2})\geq n-81$, then $d_{G}(v_{2_{j}})\leq l_{j-1}+l_{j}+4\leq n+2-d_{G}(v_{2})\leq 83$, and then $$\ \displaystyle \frac{d_{G}(v_{2_{j}})x_{2_{j}}+\sum_{v_{k}\sim v_{2_{j}}} x_{k}}{x_{2_{j}}}\leq d_{G}(v_{2_{j}})+\frac{\frac{11}{7}+\frac{17(d_{G}(v_{2_{j}})-2)}{7(n-2)}}{\frac{17}{7(n-2)}}  \hspace{3.8cm}$$
$$\ \ \ = 2d_{G}(v_{2_{j}})-2+\frac{11}{17}(n-2)$$$$\hspace{4cm}\  \leq 164 +\frac{11}{17}(n-2)\ \ \hspace{5cm}$$$$\leq n+2.  \hspace{1cm} (n\geq 456)\ \,$$
For each $v_{f}\in (N_{G}(v_{1})\backslash \{v_{2}$, $v_{2_{1}}$, $v_{2_{2}}$, $\ldots$, $v_{2_{t}}\})$, along clockwise direction, then there exists $0\leq s\leq t$ such that $v_{f}$ is between $v_{2_{s}}$ and $v_{2_{s+1}}$ on $C_{v_{1}}$, and then $d_{G}(v_{f})\leq l_{s}+2\leq n-d_{G}(v_{2})\leq 81$. As a result, $$\ \frac{d_{G}(v_{f})x_{f}+\sum_{v_{k}\sim v_{f}} x_{k}}{x_{f}}\leq d_{G}(v_{f})+\frac{\frac{11}{7}+\frac{17(d_{G}(v_{f})-2)}{7(n-2)}}{\frac{17}{7(n-2)}}  \hspace{3.7cm}$$
$$\ \ \ = 2d_{G}(v_{f})-2+\frac{11}{17}(n-2)$$$$\ \, \leq 160 +\frac{11}{17}(n-2)\ \ \hspace{1cm}$$$$\leq n+2.  \hspace{1cm} (n\geq 456)\ $$

As a result, $Q(G)X\leq (n+2)X$. By Lemma \ref{le3,3}, we get that $q(G)\leq n+2$.
This completes the proof.
\ \ \ \ \ $\Box$
\end{proof}

\begin{lemma}\label{le3,11} 
Let $G$ be a maximal planar graph with order $n \geq 15$, $d_{G}(v_{1})=\Delta(G) = n-1$. \\
$(\mathrm{i})$ if $\Delta^{'}(G)= n-2$, then $q(G)< q(\mathcal {H})$;\\
$(\mathrm{ii})$ if $\Delta^{'}(G)= n-3$, then $q(G)< q(\mathcal {H})$.
\end{lemma}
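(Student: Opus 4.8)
Since $d_G(v_1)=\Delta(G)=n-1$, the vertex $v_1$ is adjacent to every other vertex, so $G=v_1\nabla H'$ with $H'=G^{\circ}(v_1)$ a maximal outer-planar graph on $n-1$ vertices, and $d_G(v)=d_{H'}(v)+1$ for every $v\ne v_1$; in particular $\Delta'(G)=\Delta(H')+1$, so part $(\mathrm{i})$ is exactly the case $\Delta(H')=n-3$ and part $(\mathrm{ii})$ the case $\Delta(H')=n-4$. I would first pin down the structure of $H'$. Let $v_2$ be a vertex with $d_{H'}(v_2)=\Delta(H')$; since the interior faces of a maximal outer-planar graph are triangles, the neighbours of $v_2$ occur in a rotational order $a_1,\dots,a_{\Delta(H')}$ with $a_ja_{j+1}\in E(H')$ for all $j$, so $H'$ contains the fan $v_2\nabla(a_1a_2\cdots a_{\Delta(H')})$. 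Counting the $2(n-1)-3$ edges of $H'$ shows nothing is left over except the $n-2-\Delta(H')$ remaining vertices, each of which must be attached as an ``ear'' on a boundary edge $a_ja_{j+1}$ of the fan (not on $v_2a_1$ or $a_{\Delta(H')}v_2$, since those vertices are not adjacent to $v_2$). Hence $H'$ is a fan with exactly one ear in case $(\mathrm{i})$ and exactly two ears (possibly nested) in case $(\mathrm{ii})$; a short argument based on the non-outer-planarity of $K_{2,3}$ shows that for $n\ge15$ the vertex $v_2$ is the only vertex besides $v_1$ of large degree, and that every other vertex of $G$ has degree at most $5$ in case $(\mathrm{i})$ and at most $6$ in case $(\mathrm{ii})$. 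Thus $G$ ranges over an explicit finite family whose unique ``no ear'' member is $\mathcal{H}=v_1\nabla(v_2\nabla P_{n-2})$.

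To bound $q(G)$ I would apply Lemma \ref{le3,3} to the test vector $X$ with $x_{v_1}=1$, $x_{v_2}=\alpha$, and $x_v=\beta$ for every other vertex $v$, choosing $\alpha,\beta>0$ suitably. The point of taking a single value $\beta$ on all of $V(G)\setminus\{v_1,v_2\}$ is that, because each $a_j$ is adjacent to $v_2$ while each ear vertex is not, the row inequality $(Q(G)X)_v\le rx_v$ depends only on $d_G(v)$: at $v_1$ it reads $\alpha+(n-2)\beta\le r-(n-1)$; at $v_2$, $1+\Delta(H')\beta\le(r-n+2)\alpha$; at an $a_j$, $(2d_G(v)-2)\beta+1+\alpha\le r\beta$ (worst at the largest occurring degree, $5$ resp.\ $6$); and at an ear vertex, $5\beta+1\le r\beta$ (or $7\beta+1\le r\beta$ for a nested ear), which is the weakest. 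So the whole family is handled at once, and it remains to solve a system of three inequalities in $(\alpha,\beta)$. For $n$ sufficiently large one may take $r=n+2$, for instance $\alpha=\frac{n-8}{n-4}$, $\beta=\frac{2}{n-4}$ in case $(\mathrm{i})$ (with an analogous choice in case $(\mathrm{ii})$), whence $q(G)\le n+2<q(\mathcal{H})$ by Lemmas \ref{le3,3} and \ref{le3,2}. For the remaining small $n$ the degree-$5$ (resp.\ degree-$6$) vertices forbid reaching $r=n+2$, so one instead uses $r=n+2+c/n$ for a small explicit $c$, which is still $<q(\mathcal{H})$ once one sharpens Lemma \ref{le3,2} to $q(\mathcal{H})=n+2+\Theta(1/n)$ by carrying the recursion in its proof one or two further steps.

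I expect the quantitative point to be the main obstacle: Lemma \ref{le3,2} only yields $q(\mathcal{H})>n+2$, but $q(\mathcal{H})$ beats $n+2$ by only $\Theta(1/n)$, so when the clean estimate $q(G)\le n+2$ is unavailable both sides of the comparison must be tracked to precision $o(1/n)$, and in case $(\mathrm{ii})$ one must in addition absorb the extra degree-$6$ vertex a nested ear can create without letting the admissible $r$ rise above $q(\mathcal{H})$. By contrast the structural classification of $H'$ and the verification of the $(\alpha,\beta)$-inequalities are routine.
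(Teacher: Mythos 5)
Your route is genuinely different from the paper's. You classify $G^{\circ}(v_{1})$ as a fan with one or two ears (this matches the paper's graphs $D_{1}$--$D_{4}$ exactly) and then try to certify $q(G)\le n+2$ with a three-valued test vector via Lemma \ref{le3,3}, concluding through Lemma \ref{le3,2}. The paper never proves $q(G)\le n+2$ here; instead it runs a local edge-rotation argument: with $X$ the Perron vector of $Q(G)$ it deduces from the eigenvalue equations that $x_{2}+x_{k}>x_{k-1}+x_{k+1}$, so for $F=G-v_{k-1}v_{k+1}+v_{2}v_{k}$ the Rayleigh quotient gives $q(F)>X^{T}Q(G)X=q(G)$, and $F\cong\mathcal{H}$ in case (i); in case (ii) one such switch in each of $D_{2},D_{3},D_{4}$ reduces to case (i). That comparison is direct, uses only $n\ge 15$, and never passes through the threshold $n+2$. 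Your method has the advantage of handling the whole family at once with one certificate, and it is indeed sufficient in the regime $n\ge 456$ where the lemma is actually invoked.

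The gap is the one you flag yourself, and it is real because the statement asserts the conclusion for all $n\ge 15$. With your $\alpha=\frac{n-8}{n-4}$, $\beta=\frac{2}{n-4}$, the rows at $v_{1}$ and at the degree-$5$ vertices hold with equality, while the row at $v_{2}$ reads $1+(n-3)\beta\le 4\alpha$, i.e. $3+\frac{2}{n-4}\le 4-\frac{16}{n-4}$, which requires $n\ge 22$; and this is not an artifact of your particular constants: for any vector of the shape $(1,\alpha,\beta,\dots,\beta)$ the three constraints $\alpha+(n-2)\beta\le 3$, $1+(n-3)\beta\le 4\alpha$, $1+\alpha\le(n-6)\beta$ are simultaneously feasible only when $n\ge 22$, and in case (ii) the degree-$6$ vertex and $d_{G}(v_{2})=n-3$ push the requirement slightly higher still. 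So with $r=n+2$ your certificate simply does not exist for $15\le n\le 21$, and the proposed repair for those $n$ (a sharpened Lemma \ref{le3,2} of the form $q(\mathcal{H})\ge n+2+c/n$, plus a certificate at level $r=n+2+c'/n<q(\mathcal{H})$) is only a sketch: neither the strengthened lower bound nor the existence of such an $r$ is established, and it is not even verified that $q(G)\le n+2$ holds in that range. As written, the proposal proves the lemma for all sufficiently large $n$ (enough for Theorems \ref{th3,12}--\ref{th3,14}) but not the lemma as stated; the paper's switching argument is what closes the $15\le n\le 21$ window uniformly.
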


\setlength{\unitlength}{0.6pt}
\begin{center}
\begin{picture}(489,480)
\qbezier(19,386)(19,416)(40,438)\qbezier(40,438)(61,460)(91,460)\qbezier(91,460)(120,460)(141,438)
\qbezier(141,438)(163,416)(163,386)\qbezier(163,386)(163,355)(141,333)\qbezier(141,333)(120,311)(91,311)
\qbezier(91,312)(61,312)(40,333)\qbezier(40,333)(19,355)(19,386)
\put(21,369){\circle*{4}}
\put(86,312){\circle*{4}}
\qbezier(21,369)(53,341)(86,312)
\put(33,430){\circle*{4}}
\qbezier(86,312)(59,371)(33,430)
\put(59,452){\circle*{4}}
\qbezier(86,312)(72,382)(59,452)
\put(126,448){\circle*{4}}
\qbezier(86,312)(106,380)(126,448)
\put(416,314){\circle*{4}}
\put(157,358){\circle*{4}}
\qbezier(86,312)(121,335)(157,358)
\qbezier(59,452)(92,450)(126,448)
\put(133,326){\circle*{4}}
\put(150,387){\circle*{4}}
\put(35,374){\circle*{4}}
\put(98,459){\circle*{4}}
\put(35,397){\circle*{4}}
\put(35,386){\circle*{4}}
\put(146,401){\circle*{4}}
\put(142,415){\circle*{4}}
\put(91,465){$v_{k}$}
\put(25,457){$v_{k-1}$}
\put(81,299){$v_{2}$}
\put(44,329){\circle*{4}}
\put(-2,436){$v_{k-2}$}
\put(1,368){$v_{4}$}
\put(25,321){$v_{3}$}
\put(134,316){$v_{n}$}
\put(161,354){$v_{n-1}$}
\put(124,453){$v_{k+1}$}
\put(78,271){$D_{1}$}
\put(180,-9){Fig. 3.6. $D_{1}-D_{4}$}
\qbezier(341,388)(341,357)(362,335)\qbezier(362,335)(384,314)(415,314)\qbezier(415,314)(445,314)(467,335)
\qbezier(467,335)(489,357)(489,388)\qbezier(341,388)(341,418)(362,440)\qbezier(362,440)(384,462)(415,462)
\qbezier(415,462)(445,462)(467,440)\qbezier(467,440)(489,418)(489,388)
\put(358,340){\circle*{4}}
\qbezier(416,314)(387,327)(358,340)
\put(356,434){\circle*{4}}
\qbezier(416,314)(386,374)(356,434)
\put(479,352){\circle*{4}}
\qbezier(416,314)(447,333)(479,352)
\put(471,434){\circle*{4}}
\qbezier(416,314)(443,374)(471,434)
\qbezier(356,434)(413,434)(471,434)
\put(386,456){\circle*{4}}
\qbezier(471,434)(428,445)(386,456)
\put(449,82){\circle*{4}}
\put(475,376){\circle*{4}}
\put(437,459){\circle*{4}}
\put(356,371){\circle*{4}}
\put(355,386){\circle*{4}}
\put(355,402){\circle*{4}}
\put(474,392){\circle*{4}}
\put(473,406){\circle*{4}}
\put(409,301){$v_{2}$}
\put(339,337){$v_{4}$}
\put(367,460){$v_{k}$}
\put(318,435){$v_{k-1}$}
\put(428,467){$v_{k+1}$}
\put(473,436){$v_{k+2}$}
\put(484,347){$v_{n-1}$}
\put(406,272){$D_{2}$}
\qbezier(19,145)(19,115)(40,94)\qbezier(40,94)(61,73)(91,73)\qbezier(91,73)(120,73)(141,94)
\qbezier(141,94)(163,115)(163,145)\qbezier(19,145)(19,174)(40,195)\qbezier(40,195)(61,217)(91,217)
\qbezier(91,217)(120,217)(141,195)\qbezier(141,195)(163,174)(163,145)
\put(34,131){\circle*{4}}
\put(92,73){\circle*{4}}
\put(30,107){\circle*{4}}
\qbezier(92,73)(61,90)(30,107)
\put(383,320){\circle*{4}}
\put(458,328){\circle*{4}}
\put(369,308){$v_{3}$}
\put(462,320){$v_{n}$}
\put(33,146){\circle*{4}}
\put(130,85){\circle*{4}}
\put(53,84){\circle*{4}}
\put(153,109){\circle*{4}}
\qbezier(92,73)(122,91)(153,109)
\put(92,217){\circle*{4}}
\qbezier(92,73)(92,145)(92,217)
\put(33,187){\circle*{4}}
\qbezier(92,217)(62,202)(33,187)
\put(146,189){\circle*{4}}
\qbezier(92,217)(119,203)(146,189)
\qbezier(92,73)(62,130)(33,187)
\qbezier(92,73)(119,131)(146,189)
\put(127,207){\circle*{4}}
\put(58,209){\circle*{4}}
\put(33,162){\circle*{4}}
\put(149,133){\circle*{4}}
\put(149,147){\circle*{4}}
\put(148,162){\circle*{4}}
\put(86,60){$v_{2}$}
\put(41,72){$v_{3}$}
\put(10,106){$v_{4}$}
\put(86,230){$v_{k}$}
\put(38,222){$v_{k-1}$}
\put(5,196){$v_{k-2}$}
\put(130,220){$v_{k+1}$}
\put(151,193){$v_{k+2}$}
\put(161,110){$v_{n-1}$}
\put(131,79){$v_{n}$}
\put(84,29){$D_{3}$}
\qbezier(344,144)(344,114)(364,93)\qbezier(364,93)(385,73)(415,73)\qbezier(415,73)(444,73)(465,93)
\qbezier(465,93)(486,114)(486,144)\qbezier(344,144)(344,173)(364,194)\qbezier(364,194)(385,215)(415,215)
\qbezier(415,215)(444,215)(465,194)\qbezier(465,194)(486,173)(486,144)
\put(413,73){\circle*{4}}
\put(366,195){\circle*{4}}
\qbezier(413,73)(389,134)(366,195)
\put(344,142){\circle*{4}}
\qbezier(366,195)(355,169)(344,142)
\put(385,208){\circle*{4}}
\qbezier(413,73)(399,141)(385,208)
\put(428,213){\circle*{4}}
\qbezier(413,73)(420,143)(428,213)
\put(453,203){\circle*{4}}
\qbezier(413,73)(433,138)(453,203)
\put(484,159){\circle*{4}}
\qbezier(453,203)(468,181)(484,159)
\put(474,183){\circle*{4}}
\put(350,175){\circle*{4}}
\put(398,202){\circle*{4}}
\put(408,203){\circle*{4}}
\put(419,203){\circle*{4}}
\put(359,100){\circle*{4}}
\qbezier(413,73)(386,87)(359,100)
\put(470,100){\circle*{4}}
\qbezier(413,73)(441,87)(470,100)
\put(369,112){\circle*{4}}
\put(364,124){\circle*{4}}
\put(360,135){\circle*{4}}
\put(468,121){\circle*{4}}
\put(472,134){\circle*{4}}
\put(475,146){\circle*{4}}
\put(385,79){\circle*{4}}
\put(407,60){$v_{2}$}
\put(371,67){$v_{3}$}
\put(338,95){$v_{4}$}
\put(309,142){$v_{k-1}$}
\put(325,183){$v_{k}$}
\put(331,207){$v_{k+1}$}
\put(362,222){$v_{k+2}$}
\put(422,224){$v_{l-2}$}
\put(456,212){$v_{l-1}$}
\put(481,186){$v_{l}$}
\put(489,161){$v_{l+1}$}
\put(475,102){$v_{n-1}$}
\put(451,75){$v_{n}$}
\put(404,37){$D_{4}$}
\end{picture}
\end{center}

\begin{proof}
Suppose $d_{G}(v_{2})=\Delta^{'}(G)$, $v_{1}$ is in the outer face of $G^{\circ}_{v_{1}}$, and suppose that $C_{v_{1}}=v_{2}v_{3}\cdots v_{n-1}v_{n}v_{2}$ is the Hamiltonian
cycle in $G^{\circ}_{v_{1}}$ (see Fig. 3.6).

(i) Suppose $d_{G}(v_{2})=\Delta^{'}(G)= n-2$ and $v_{k}\notin N_{G}(v_{2})$ ($4\leq k\leq n-1$). Then $G^{\circ}_{v_{1}}\cong D_{1}$ (see Fig. 3.6). For convenience, we suppose $G^{\circ}_{v_{1}}= D_{1}$. By Lemma \ref{le2,4}, we get that $q(G)> 15$. Let $X=(x_1, x_2, \ldots, x_{n})^T \in R^{n}$ be the Perron eigenvector corresponding to $q(G)$, where $x_{i}$ corresponds to vertex $v_{i}$.

Note that $$q(G)x_k=3x_k+x_{k-1}+x_{k+1}+x_{1},\ \ \ \hspace{2.1cm} (6)$$
$$\displaystyle q(G)x_2=(n-2)x_2+x_1+\sum_{3\leq i\leq n, i\neq k}x_i. \ \ \hspace{1.5cm} (7)$$ (6), (7) tell us that
$$\displaystyle q(G)x_2-q(G)x_k=(n-2)x_2-3x_k+\sum_{3\leq i\leq k-2} x_i+\sum_{k+2\leq i\leq n} x_i,$$ $$(q(G)-3)(x_2-x_k)=(n-5)x_2+\sum_{3\leq i\leq k-2} x_i+\sum_{k+2\leq i\leq n} x_i.\ \ \ \ $$ Because $n\geq 15$, it follows immediately that $x_2>x_k$.

Note that $$q(G)x_{k-1}=5x_{k-1}+x_{1}+x_2+x_{k-2}+x_k+x_{k+1},$$ $$q(G)x_{k+1}=5x_{k+1}+x_1+x_2+x_{k-1}+x_{k}+x_{k+2}.$$ Then
$$q(G)(x_{k-1}+x_{k+1})=6(x_{k-1}+x_{k+1})+2(x_{1}+x_k+x_2)+x_{k-2}+x_{k+2}. \hspace{1.2cm} (8)$$
From (6) and (7), we also get that $$\displaystyle q(G)(x_2+x_k)=(n-2)x_2+3x_k+2x_1+2x_{k-1}+2x_{k+1}+\sum_{3\leq i\leq k-2} x_i+\sum_{k+2\leq i\leq n} x_i. \hspace{0.6cm} (9)$$
By (9)-(8), we get that $$\displaystyle q(G)(x_2+x_k)-q(G)(x_{k-1}+x_{k+1}) \ \ \ \hspace{8.2cm}$$$$=(n-11)x_2+4(x_{2}+x_k)-4(x_{k-1}+x_{k+1})+3(x_2-x_{k})+\sum_{3\leq i\leq k-3} x_i+\sum_{k+3\leq i\leq n} x_i.$$
It follows that $$\displaystyle (q(G)-4)[x_2+x_k-(x_{k-1}+x_{k+1})]=(n-11)x_2+3(x_2-x_{k})+\sum_{3\leq i\leq k-3} x_i+\sum_{k+3\leq i\leq n} x_i. \hspace{0.5cm} (10)$$
Because $n\geq 15$, (10) tells us that $x_2+x_k>x_{k-1}+x_{k+1}$.

Let $F=G-v_{k-1}v_{k+1}+v_{2}v_{k}$. Note the relation between the Rayleigh quotient and the largest eigenvalue of a non-negative real symmetric matrix, and note that $$X^{T}Q(F)X-X^{T}Q(G)X=(x_2+x_k)^{2}-(x_{k-1}+x_{k+1})^{2}.$$ It follows that when $n\geq 15$, then $q(F)>X^{T}Q(F)X> X^{T}Q(G)X=q(G)$. Because $F\cong \mathcal {H}$,  $q(\mathcal {H})>q(G)$. Then (i) follows.

(ii)  Suppose $d_{G}(v_{2})=\Delta^{'}(G)= n-3$. Then there are three cases for $G$, that is, $G\cong D_{2}$, $G\cong D_{3}$ or $G\cong D_{4}$ (see Fig. 3.6). By Lemma \ref{le2,4}, we know that $q(G) > 15$.

{\bf Case 1} $G\cong D_{2}$. For convenience, we suppose that $G= D_{2}$. Because $d_{G}(v_{2})=\Delta^{'}(G)= n-3$, $4\leq k\leq n-2$. Let $X=(x_1, x_2, \ldots, x_{n})^T \in R^{n}$ be the Perron eigenvector corresponding to $q(G)$, where $x_{i}$ corresponds to vertex $v_{i}$.

Note that $$q(G)x_{k+1}=3x_{k+1}+x_1+x_{k}+x_{k+2}, \ \ \hspace{3cm} (11)$$
$$q(G)x_k=4x_k+x_1+x_{k-1}+x_{k+1}+x_{k+2}.  \hspace{2.5cm} (12)$$
By (12)-(11), we get $$q(G)(x_{k}-x_{k+1})=3x_{k}-2x_{k+1}+x_{k-1}. \hspace{2.8cm}$$ Then $$(q(G)-2)(x_{k}-x_{k+1})=x_{k}+x_{k-1}.\ \ \ \hspace{2.5cm} (13)$$
Because $n\geq 15$, (13) implies $x_{k} > x_{k+1}$.

Note that $$\ q(G)x_2=(n-3)x_2+x_1+\sum_{3\leq i\leq k-1} x_i+\sum_{k+2\leq i\leq n} x_i. \ \ \hspace{1.5cm} (14)$$
By (14)+(12), we get
$$q(G)(x_2+x_k)=(n-3)x_2+2x_1+4x_k+2x_{k-1}+x_{k+1}+2x_{k+2}+\sum_{3\leq i\leq k-2}x_{i}+\sum_{k+3\leq i\leq n}x_{i}. \hspace{0.7cm} (15)$$  Note that $$q(G)x_{k-1}=5x_{k-1}+x_{1}+x_2+x_{k-2}+x_k+x_{k+2}.\ \ \ \  \hspace{2cm} (16)$$
By (16)+(11), we get that $$q(G)(x_{k-1}+x_{k+1})=5x_{k-1}+x_{k-2}+2x_{1}+x_{2}+2x_k+3x_{k+1}+2x_{k+2}.\ \ \hspace{1cm} (17)$$
By (17)-(12), we get that $$q(G)(x_{k-1}+x_{k+1}-x_k)=x_{1}+x_{2}+x_{k-2}+4x_{k-1}+2x_{k+1}+x_{k+2}-2x_k.$$
Then $$(q(G)-2)(x_{k-1}+x_{k+1}-x_k)=x_{1}+x_2+x_{k-2}+2x_{k-1}+x_{k+2}>0. \hspace{1cm} (18)$$
Because $n\geq 15$, (18) implies that $x_{k-1}+x_{k+1}>x_k$.

By (14)-(11), we get that
$$q(G)(x_2-x_{k+1})=(n-3)x_2+x_{k-1}-x_k-3x_{k+1}+\sum_{3\leq i\leq k-2}x_{i}+\sum_{k+3\leq i\leq n}x_{i}>0.$$
Then $$(q(G)-4)(x_2-x_{k+1})=(n-7)x_2+x_{k-1}+x_{k+1}-x_k+\sum_{3\leq i\leq k-2}x_{i}+\sum_{k+3\leq i\leq n}x_{i}>0. \hspace{0.7cm} (19)$$
Because $n\geq 15$, (19) implies $x_2>x_{k+1}$.

By (14)-(12), we get that
$$q(G)(x_2-x_k)=(n-3)x_2-4x_k-x_{k+1}+\sum_{3\leq i\leq k-2}x_{i}+\sum_{k+3\leq i\leq n}x_{i}.$$
Then $$(q(G)-4)(x_2-x_k)=(n-8)x_2+x_2-x_{k+1}+\sum_{3\leq i\leq k-2}x_{i}+\sum_{k+3\leq i\leq n}x_{i}>0. \hspace{1cm} (20)$$
Because $n\geq 15$, (20) implies that $x_2>x_k$.

By (14)-(16), we get that
$$q(G)(x_2-x_{k-1})=(n-4)x_2-4x_{k-1}-x_k+\sum_{3\leq i\leq k-3}x_{i}+\sum_{k+3\leq i\leq n}x_{i}.$$
Then $$(q(G)-4)(x_2-x_{k-1})=(n-9)x_2+x_{2}-x_k+\sum_{3\leq i\leq k-3}x_{i}+\sum_{k+3\leq i\leq n}x_{i}. \hspace{0.7cm} (21)$$
Because $n\geq 15$, (21) implies that $x_2>x_{k-1}$.

Note that
$$q(G)x_{k+2}=6x_{k+2}+x_{k+1}+x_{k}+x_{k-1}+x_{k+3}+x_2+x_1.\ \ \hspace{2.2cm} (22)$$
By (14)-(22), we get that
$$q(G)(x_2-x_{k+2})=(n-4)x_2-5x_{k+2}-x_k-x_{k+1}+\sum_{3\leq i\leq k-2}x_{i}+\sum_{k+4\leq i\leq n}x_{i}.$$
Then $$(q(G)-5)(x_2-x_{k+2})=(n-11)x_2+x_{2}-x_k+x_{2}-x_{k+1}+\sum_{3\leq i\leq k-2}x_{i}+\sum_{k+4\leq i\leq n}x_{i}. \hspace{0.5cm} (23)$$ Because $n\geq 15$, (23) implies that $x_2>x_{k+2}$.

By (16)+(22), we get that
$$q(G)(x_{k-1}+x_{k+2})=2x_1+2x_2+x_{k-2}+6x_{k-1}+2x_{k}+x_{k+1}+7x_{k+2}+x_{k+3}. \hspace{1cm} (24)$$
By (15)-(24), we get that $$q(G)(x_{2}+x_k)-q(G)(x_{k-1}+x_{k+2}) \hspace{8.8cm}$$
$$=(n-5)x_2-4x_{k-1}+2x_{k}-5x_{k+2}+\sum_{3\leq i\leq k-3}x_{i}+\sum_{k+4\leq i\leq n}x_{i} \ \ \hspace{4cm}$$
$$=(n-14)x_2+4x_2-4x_{k-1}+2x_{k}+5x_2-5x_{k+2}+\sum_{3\leq i\leq k-3}x_{i}+\sum_{k+4\leq i\leq n}x_{i}. \hspace{1cm} (25)$$
Because $n\geq 15$, (25) implies that $x_{2}+x_k>x_{k-1}+x_{k+2}$.

Let $\mathbb{F} = G-v_{k-1}v_{k+2}+v_{2}v_{k}$. Note that $X^{T}Q(\mathbb{F})X-X^{T}Q(G)X = (x_{2}+x_k)^{2}-(x_{k-1}+x_{k+2})^{2}$.
It follows that when $n\geq 15$, then $q(\mathbb{F}) > X^{T}Q(\mathbb{F})X > X^{T}Q(G)X = q(G)$. By (i), it
follows immediately that $q(\mathcal {H}) > q(\mathbb{F}) > q(G)$.

{\bf Case 2} $G\cong D_{3}$.

For convenience, we suppose that $G= D_{3}$. Because $d_{G}(v_{2})=\Delta^{'}(G)= n -3$, $5\leq k\leq n-2$. Let $X=(x_1$, $x_2$, $\ldots$, $x_{n})^T \in R^{n}$ be the Perron eigenvector corresponding to $q(G)$, where $x_{i}$ corresponds to vertex $v_{i}$.

Note that $$q(G)x_{k+1}=3x_{k+1}+x_{1}+x_k+x_{k+2}, \hspace{4cm} (26)$$
$$q(G)x_{k-1}=3x_{k-1}+x_1+x_{k}+x_{k-2}, \ \ \ \hspace{3.7cm} (27)$$
$$q(G)x_2=(n-3)x_2+x_1+x_k+\sum_{3\leq i\leq k-2}x_{i}+\sum_{k+2\leq i\leq n}x_{i}.\ \ \ \ \hspace{0.3cm} (28)$$
Then $$q(G)x_2-q(G)x_{k+1}=(n-3)x_2-3x_{k+1}+\sum_{3\leq i\leq k-2}x_{i}+\sum_{k+3\leq i\leq n}x_{i},$$
$$(q(G)-3)(x_2-x_{k+1})=(n-6)x_2+\sum_{3\leq i\leq k-2}x_{i}+\sum_{k+3\leq i\leq n}x_{i}>0.$$
This implies that $x_2>x_{k+1}$.
By (28)-(27), we get that $$q(G)x_2-q(G)x_{k-1}=(n-3)x_2-3x_{k-1}+\sum_{3\leq i\leq k-3}x_{i}+\sum_{k+2\leq i\leq n}x_{i}.$$
Then $$(q(G)-3)(x_2-x_{k-1})=(n-6)x_2+\sum_{3\leq i\leq k-3}x_{i}+\sum_{k+2\leq i\leq n}x_{i}>0.$$
This implies that $x_2>x_{k-1}$.
Note that $$q(G)x_k=6x_k+x_1+x_2+x_{k-2}+x_{k-1}+x_{k+1}+x_{k+2},\ \   \hspace{1.2cm} (29)$$
$$q(G)x_{k+2}=5x_{k+2}+x_1+x_2+x_{k+1}+x_{k}+x_{k+3}.\ \ \ \  \hspace{2cm} (30)$$
By (28)-(29), we get $$q(G)x_2-q(G)x_k=(n-4)x_2-5x_k-x_{k-1}-x_{k+1}+\sum_{3\leq i\leq k-3}x_{i}+\sum_{k+3\leq i\leq n}x_{i}.$$
Then $$(q(G)-5)(x_2-x_k)=(n-11)x_2+2x_2-x_{k-1}-x_{k+1}+\sum_{3\leq i\leq k-3}x_{i}+\sum_{k+3\leq i\leq n}x_{i}.$$
This implies that $x_2>x_k$.
By (28)-(30), we get $$q(G)x_2-q(G)x_{k+2}=(n-4)x_2-4x_{k+2}-x_{k+1}+\sum_{3\leq i\leq k-2}x_{i}+\sum_{k+4\leq i\leq n}x_{i}.$$
Then $$(q(G)-4)(x_2-x_{k+2})=(n-9)x_2+x_2-x_{k+1}+\sum_{3\leq i\leq k-2}x_{i}+\sum_{k+4\leq i\leq n}x_{i}.$$
This implies that $x_2>x_{k+2}$.
By (26)+(28), we get that
$$q(G)(x_2+x_{k+1})=(n-3)x_2+3x_{k+1}+2x_1+2x_{k}+2x_{k+2}+\sum_{3\leq i\leq k-2}x_{i}+\sum_{k+3\leq i\leq n}x_{i}.\  \hspace{0.3cm} (31)$$
By (29)+(30), we get that
$$q(G)(x_k+x_{k+2})=2x_1+2x_2+2x_{k+1}+7x_{k}+6x_{k+2}+x_{k-1}+x_{k-2}+x_{k+3}.\   \hspace{1.5cm} (32)$$
By (31)-(32), we get that
$$q(G)(x_2+x_{k+1})-q(G)(x_k+x_{k+2}) \hspace{10cm}$$
$$=(n-5)x_2+x_{k+1}-5x_{k}-4x_{k+2}-x_{k-1}+\sum_{3\leq i\leq k-3}x_{i}+\sum_{k+4\leq i\leq n}x_{i} \ \ \hspace{5cm}$$
$$=(n-15)x_2+x_{k+1}+5x_2-5x_{k}+4x_2-4x_{k+2}+x_2-x_{k-1}+\sum_{3\leq i\leq k-3}x_{i}+\sum_{k+4\leq i\leq n}x_{i}. \hspace{1cm}(33)$$
Because $n\geq 15$, (33) implies that $x_2+x_{k+1}>x_k+x_{k+2}$.

Let $\mathbb{F}=G-v_{k}v_{k+2}+v_{2}v_{k+1}$. Note that $X^{T}Q(\mathbb{F})X-X^{T}Q(G)X=(x_2+x_{k+1})^{2}-(x_k+x_{k+2})^{2}$. It follows that when $n\geq 15$, then $q(\mathbb{F})>X^{T}Q(\mathbb{F})X> X^{T}Q(G)X=q(G)$. Because $n\geq 15$, by (i), it follows immediately that $q(\mathcal {H})>q(\mathbb{F})>q(G)$.

{\bf Case 3} $G\cong D_{4}$.

For convenience, we suppose that $G= D_{4}$. Because $d_{G}(v_{2})=\Delta^{'}(G) = n -3$, $4\leq k\leq l-2$, $l\leq n-1$. Let $X=(x_1$, $x_2$, $\ldots$, $x_{n})^T \in R^{n}$ be the Perron eigenvector corresponding to $q(G)$, where $x_{i}$ corresponds to vertex $v_{i}$. Let $\mathbb{F}=G-v_{l-1}v_{l+1}+v_{2}v_{l}$.  As (i), it can be proved that $q(G)<q(\mathbb{F})$. By (i), we get that $q(\mathbb{F})<q(\mathcal {H})$. Then $q(G)<q(\mathcal {H})$.

From above three cases, (ii) follows. This completes the proof.
\ \ \ \ \ $\Box$
\end{proof}

\begin{theorem}\label{th3,12} 
Let $G$ be a maximal planar graph with order $n \geq 456$, $d(v_{1})=\Delta(G) = n -1$ and $\Delta^{'}(G)\leq n-2$. Then $q(G)<q(\mathcal {H})$.
\end{theorem}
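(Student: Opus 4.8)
The plan is to prove Theorem \ref{th3,12} by a case analysis on the value of the second largest degree $\Delta^{'}(G)$, applying in each range whichever of Lemmas \ref{le3,8}--\ref{le3,11} is tailored to it, and then comparing against the lower bound $q(\mathcal {H})>n+2$ furnished by Lemma \ref{le3,2} (valid since $n\geq 456\geq 5$). Thus, in every subcase where we can establish $q(G)\leq n+2$ we immediately conclude $q(G)<n+2<q(\mathcal {H})$; only the ``large $\Delta^{'}$'' subcase needs a direct comparison, which Lemma \ref{le3,11} already provides.

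First I would treat the large range $\Delta^{'}(G)\in\{n-3,\,n-2\}$: here Lemma \ref{le3,11}(i)--(ii) yields $q(G)<q(\mathcal {H})$ outright. Next, for the intermediate range $n-81\leq \Delta^{'}(G)\leq n-4$, Lemma \ref{le3,10} gives $q(G)\leq n+2$, and we finish by the remark above. This range is exactly the one place where a numerical threshold must be double-checked, since Lemma \ref{le3,10} is stated for $n\geq 461$ while the theorem assumes $n\geq 456$; one should either confirm the argument of Lemma \ref{le3,10} survives down to $n=456$ (the computations there leave ample slack, cf.\ the ``$n\geq 456$'' side conditions appearing in its proof) or else dispatch the finitely many orders $456\leq n\leq 460$ separately.

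It remains to handle $\Delta^{'}(G)\leq n-82$. Then every vertex $v_{i}$ with $i\geq 2$ has $d_{G}(v_{i})\leq n-82\leq n-75$, so the upper-degree hypotheses in Lemmas \ref{le3,8} and \ref{le3,9} are automatically satisfied. I would split on whether any vertex other than $v_{1}$ is ``medium-sized'': if there is an index $i\geq 2$ with $d_{G}(v_{i})\geq \tfrac{n}{7}+\tfrac{19}{7}$, collect all such vertices as $v_{2},v_{3},\ldots,v_{k+1}$; the degree-sum estimate in the proof of Lemma \ref{le3,8} (using $\sum_{i=1}^{n}d_{G}(v_{i})=2(3n-6)$ together with $d_{G}(v_{i})\geq 3$) forces $1\leq k\leq 13$, so Lemma \ref{le3,8} applies and gives $q(G)\leq n+2$. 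Otherwise $d_{G}(v_{i})<\tfrac{n}{7}+\tfrac{19}{7}$ for all $i\geq 2$, and Lemma \ref{le3,9} gives $q(G)\leq n+2$. In both subcases Lemma \ref{le3,2} completes the argument.

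The expected main obstacle is organizational rather than computational: verifying that the four degree windows $\{n-3,n-2\}$, $[n-81,n-4]$, $\bigl[\tfrac{n}{7}+\tfrac{19}{7},\,n-82\bigr]$ (medium vertex present), and $\bigl[3,\,\tfrac{n}{7}+\tfrac{19}{7}\bigr)$ (no such vertex) genuinely partition all possibilities for a maximal planar graph with $\Delta(G)=n-1$ and $\Delta^{'}(G)\leq n-2$, and that $n\geq 456$ meets the order requirements of each of Lemmas \ref{le3,2}, \ref{le3,8}, \ref{le3,9}, \ref{le3,10}, \ref{le3,11}. Once this bookkeeping is in place, the proof of Theorem \ref{th3,12} is a one-line assembly of the form ``this theorem follows from Lemmas \ref{le3,2} and \ref{le3,8}--\ref{le3,11}.''
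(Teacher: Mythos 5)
Your proposal is correct and takes essentially the same route as the paper, whose proof of Theorem \ref{th3,12} is precisely the one-line assembly of Lemmas \ref{le3,2} and \ref{le3,8}--\ref{le3,11} along the case split on $\Delta^{'}(G)$ that you spell out. The mismatch you flag between the hypothesis $n\geq 461$ of Lemma \ref{le3,10} and the theorem's $n\geq 456$ is a discrepancy in the paper itself, and your handling is sound: the binding estimate in that lemma's proof (the ``$n\geq 456$'' side condition) indeed only requires $n\geq 456$.
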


\begin{proof}
This theorem follows from Lemmas \ref{le3,2} and \ref{le3,8}-\ref{le3,11}.
\ \ \ \ \ $\Box$
\end{proof}

\begin{theorem}\label{th3,13} 
Let $G$ be a maximal planar graph with order $n \geq 456$. Then $q(G)\leq q(\mathcal {H})$ with equality if and only if $G\cong\mathcal {H}$.
\end{theorem}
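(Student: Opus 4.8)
The plan is to combine the case analysis built up in the preceding lemmas and theorems, the only genuinely new ingredient being a structural identification of the extremal graph. First, observe that $\mathcal{H}=K_{2}\nabla P_{n-2}$ is itself a maximal planar graph of order $n$: it is planar, and $m(\mathcal{H})=(n-1)+(n-2)+(n-3)=3n-6$. Hence $q(\mathcal{H})$ is an attained value of $q$ over the maximal planar graphs of order $n$, and by Lemma \ref{le3,2} we have $q(\mathcal{H})>n+2$. Now fix an arbitrary maximal planar graph $G$ of order $n\geq 456$. If $\Delta(G)\leq n-3$, Lemma \ref{le3,1}(iii) gives $q(G)\leq n+2<q(\mathcal{H})$. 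If $\Delta(G)=n-2$, Theorem \ref{th3,7} (valid since $n\geq 380$) gives $q(G)\leq n+2<q(\mathcal{H})$. If $\Delta(G)=n-1$ and $\Delta^{'}(G)\leq n-2$, Theorem \ref{th3,12} (valid since $n\geq 456$) gives $q(G)<q(\mathcal{H})$. So in each of these cases $q(G)<q(\mathcal{H})$ strictly.

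The one remaining case is $\Delta(G)=\Delta^{'}(G)=n-1$, and here I would show directly that $G\cong\mathcal{H}$. Let $v_{1},v_{2}$ be vertices with $d_{G}(v_{1})=d_{G}(v_{2})=n-1$; each is then adjacent to every other vertex. Write $H$ for the subgraph of $G$ induced on $V(G)\backslash\{v_{1},v_{2}\}$, so that $G=K_{2}\nabla H$. Counting edges incident to $\{v_{1},v_{2}\}$ gives $m(H)=m(G)-(2(n-2)+1)=(3n-6)-(2n-3)=n-3$. If some vertex of $H$ had $H$-degree $\geq 3$, with neighbours $a,b,c$ in $H$, then $v_{1},v_{2}$ and that vertex would each be adjacent to $a,b,c$, producing a $K_{3,3}$ subgraph of $G$ and contradicting planarity; hence $\Delta(H)\leq 2$, so $H$ is a disjoint union of paths and cycles. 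If $H$ contained a cycle $C_{k}$ as a component, then $K_{2}\nabla C_{k}$ would be a subgraph of $G$ on $k+2$ vertices with $3k+1$ edges, contradicting $m\leq 3(k+2)-6=3k$ for planar graphs; hence $H$ is a linear forest. A linear forest on $n-2$ vertices with $n-3$ edges is connected, i.e. $H\cong P_{n-2}$, whence $G\cong K_{2}\nabla P_{n-2}=\mathcal{H}$ and $q(G)=q(\mathcal{H})$.

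Combining the cases, $q(G)\leq q(\mathcal{H})$ for every maximal planar graph $G$ of order $n\geq 456$, with equality occurring only in the last case, i.e. only when $G\cong\mathcal{H}$; and of course $G\cong\mathcal{H}$ does give equality. I do not expect a serious obstacle: the heavy analytic estimates are already carried out in Lemmas \ref{le3,1}--\ref{le3,6} and \ref{le3,8}--\ref{le3,11}, and the threshold $n\geq 456$ is simply inherited from Theorem \ref{th3,12}. The only point requiring care is precisely the case $\Delta^{'}(G)=n-1$, where one cannot use the crude bound $q(G)\leq n+2$ (it fails for $\mathcal{H}$ itself), so one must instead recognise $G$ as $\mathcal{H}$ exactly; fortunately that is the elementary edge-count sketched above.
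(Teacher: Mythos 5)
Your proposal is correct and takes essentially the same route as the paper, whose proof of this theorem is just the one-line combination of Lemmas \ref{le3,1}, \ref{le3,2} and Theorems \ref{th3,7}, \ref{th3,12}. The only difference is that you explicitly settle the residual case $\Delta(G)=\Delta^{'}(G)=n-1$ (via the $K_{3,3}$ obstruction and the edge count forcing $G\cong K_{2}\nabla P_{n-2}$), a step the paper leaves implicit; your handling of it, and of the equality case, is correct.
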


\begin{proof}
This theorem follows from Lemmas \ref{le3,1}, \ref{le3,2}, Theorems \ref{th3,7} and \ref{th3,12}.
\ \ \ \ \ $\Box$
\end{proof}

\begin{theorem}\label{th3,14} 
Let $G$ be a planar graph with order $n \geq 456$. Then $q(G)\leq q(\mathcal {H})$ with equality if and only if $G\cong\mathcal {H}$.
\end{theorem}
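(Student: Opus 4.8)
The plan is to obtain Theorem~\ref{th3,14} as a short consequence of Theorem~\ref{th3,13}, by the edge-monotonicity of the signless Laplacian spectral radius recorded in Section~2. Let $G$ be a planar graph with $n\geq 456$ vertices. First I would realize $G$ as a spanning subgraph of a maximal planar graph $G'$: fix a planar embedding of $G$ and, as long as the current graph is not a triangulation, add an edge joining two vertices that lie on a common face (when $G$ is disconnected such additions first merge the components); the process terminates at a triangulation $G'$ with $m(G')=3n-6$, which is therefore maximal planar and, since $n\geq 3$, connected.

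Next, passing from $G$ to $G'$ only adds edges, and adding an edge $uv$ replaces $Q$ by $Q+(e_u+e_v)(e_u+e_v)^{T}$, so $0\leq Q(G)\leq Q(G')$ entrywise; hence $q(G)=\rho(Q(G))\leq\rho(Q(G'))=q(G')$ by monotonicity of the Perron root of nonnegative matrices (this is the weak form of the fact ``$q(G+e)>q(G)$ if $e\notin E(G)$'' quoted in Section~2). Applying Theorem~\ref{th3,13} to the maximal planar graph $G'$ gives $q(G')\leq q(\mathcal{H})$, and therefore $q(G)\leq q(\mathcal{H})$, which is the asserted bound.

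For the equality statement, suppose $q(G)=q(\mathcal{H})$; then $q(G)=q(G')=q(\mathcal{H})$. By the equality clause of Theorem~\ref{th3,13}, $q(G')=q(\mathcal{H})$ forces $G'\cong\mathcal{H}$. To see that already $G=G'$, let $Y\geq 0$ with $\|Y\|=1$ be a Perron vector of $Q(G)$; then $q(G)=Y^{T}Q(G)Y\leq Y^{T}Q(G')Y\leq q(G')=q(G)$, so equality holds throughout. From $Y^{T}Q(G')Y=\rho(Q(G'))$ and $\|Y\|=1$ we get that $Y$ is a Perron vector of the irreducible matrix $Q(G')$, hence $Y>0$; and $Y^{T}Q(G)Y=Y^{T}Q(G')Y$ gives $\sum_{uv\in E(G')\setminus E(G)}(y_u+y_v)^{2}=0$, which with $Y>0$ is possible only if $E(G')\setminus E(G)=\emptyset$, i.e. $G=G'\cong\mathcal{H}$. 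The converse is immediate, so $q(G)=q(\mathcal{H})$ exactly when $G\cong\mathcal{H}$.

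There is no genuine obstacle here: all the work sits in Theorem~\ref{th3,13}, and the reduction is routine. The only points deserving care are that the passage to a maximal planar supergraph must be phrased so as to cover disconnected $G$ as well (handled by triangulating across components), and that the rigidity step in the equality analysis uses connectedness of the triangulation $G'$, which is automatic. One could alternatively run the whole argument through the strict inequality $q(G)<q(G')$ whenever $G\neq G'$, absorbing the equality discussion into a single step.
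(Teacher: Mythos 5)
Your proposal is correct and follows essentially the same route as the paper, which disposes of Theorem~\ref{th3,14} in one line by combining the Section~2 remark (complete $G$ to a maximal planar graph $G'$ and use monotonicity of $q$ under edge addition) with Theorem~\ref{th3,13}. Your only deviation is cosmetic but welcome: instead of invoking the strict inequality $q(G+e)>q(G)$ as the paper does (which, as stated without a connectedness hypothesis, is slightly imprecise for disconnected $G$), you use weak monotonicity plus a Rayleigh-quotient rigidity argument with the positive Perron vector of the connected triangulation $G'$, which handles the equality case cleanly.
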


\begin{proof}
This theorem follows from the narrations in Section 2 and Theorem \ref{th3,13}.
\ \ \ \ \ $\Box$
\end{proof}

{\bf Remark} By computations with computer, we can check that among all planar graphs of order $n \leq 6$, the graph $\mathcal {H}$ has the maximal signless Laplacian spectral radius. And by computation with computer, we can check and find which graph has the maximal signless Laplacian spectral radius among all planar graphs with not large order. But it seems obviously it is not a good way by computation to check and find which graph has the maximal signless Laplacian spectral radius among all planar graphs with large order because magnitudes of work need do. As for the planar graphs of order $n \leq 455$, we think that it need a smart way to check and find which graph has the maximal signless Laplacian spectral radius. By some computations with computer, we conjuncture that among all planar graphs of order $n \leq 455$, the graph $\mathcal {H}$ still has the maximal signless Laplacian spectral radius.

\small {

}

\end{document}